\newcommand{\Span}[1]{\langle #1\rangle}
\newcommand{\R}{\mathbb{R}}
\newcommand{\triv}{\underline{\R}^{n+1,1}}
\newcommand{\Lor}{\R^{n+1,1}}
\newcommand{\half}{\tfrac{1}{2}}
\DeclareMathOperator{\End}{End} 
\DeclareMathOperator{\der}{d} \DeclareMathOperator{\Ort}{O}
\DeclareMathOperator{\sk}{o} \DeclareMathOperator{\Mob}{M\ddot{o}b}
\DeclareMathOperator{\Ad}{Ad} 
\newcommand{\D}{\mathrm{d}} 
\newcommand{\del}{\partial}
\newtheorem{theorem}{Theorem}[section]
\newtheorem{corollary}[theorem]{Corollary}
\newtheorem{proposition}[theorem]{Proposition}
\theoremstyle{definition}
\newtheorem{remark}[theorem]{Remark}
\newtheorem{definition}[theorem]{Definition}
\theoremstyle{plain}
\title{Special isothermic surfaces of type $d$}
\author{F.E. Burstall}
\address{Department of Mathematical Sciences\\ University of Bath\\
Bath BA2 7AY\\UK} \email{feb@maths.bath.ac.uk}
\author{S.D. Santos}
\address{Universidade de Lisboa\\ Faculdade de Ci\^encias\\ Departamento
de Matem\'atica\\ CMAF\\1749-016 Lisboa\\Portugal}
\email{susantos@ptmat.fc.ul.pt}
\begin{document}
\begin{abstract}
The special isothermic surfaces, discovered by Darboux in connection
with deformations of quadrics, admit a simple explanation via the
gauge-theoretic approach to isothermic surfaces.  We find that they
fit into a heirarchy of special classes of isothermic surface and
extend the theory to arbitrary codimension.
\end{abstract}
\maketitle
\section*{Introduction}
Isothermic surfaces, that is, surfaces which admit conformal
curvature line coordinates away from umbilics, were the focus of
intense activity by the geometers of the turn of the last century
\cite{Bia05,Bia05a,Cal03,Cal15,Dar99b}.  In recent times the subject
has attracted new interest initiated by
Cie\'sli\'nski--Goldstein--Sym \cite{CieGolSym95} who pointed out the
relationship with integrable systems.  Consequently, a highly
developed modern theory of isothermic surfaces has emerged which
\emph{inter alia} extends the classical theory to arbitrary
codimension and more exotic target spaces
\cite{BurHerPedPin97,Bur06,KamPedPin98,HerPed97,Her97,Sch01}.

However, a substantial part of the motivation and focus of the
classical geometers such as Bianchi and Darboux was on a particular
class of isothermic surfaces, the \emph{special isothermic surfaces},
which arise from the study of surfaces which are isometric to a
quadric.  These were isothermic surfaces in $\mathbb{R}^3$ or, more
generally, a $3$-dimensional space-form of section curvature $K$
characterised by the following rather unpromising equation on the
principal curvatures:
\begin{equation*}
e^{2\theta}(H_{u}^{2}+H_{v}^{2})+\frac{1}{4}M^{2}+AM-2BH+CL+D+\frac{1}{4}L^{2}K=0.
\end{equation*}
Here, the first and second fundamental forms of the surface are respectively
\begin{equation*}
I=e^{2\theta}(\D u^{2}+\D v^{2})  \mbox{ and }
 I\!I=e^{2\theta}(k_{1}\D u^{2}+k_{2}\D v^{2}),
\end{equation*}
$H=\frac{k_{1}+k_{2}}{2}$ is the mean curvature,
$L=e^{2\theta}(k_{1}-k_{2})$, $M=-HL$ while $A,B,C,D$ are real constants. 

The geometry from which this condition arises is as follows.  An
isothermic surface $F$ admits Darboux transforms depending on an
initial condition and a spectral parameter $m\in\mathbb{R}^{\times}$:
these are new isothermic surfaces which geometrically are the second
envelopes of certain sphere congruences enveloping $F$ and
analytically are solutions of a completely integrable system of
linear differential equations (a parallel section for a flat
connection).  Each such gives rise to a congruence of circles which
cut the surface and its transform orthogonally at corresponding
points and, through each such circle, there is, generically, a unique
plane in $\mathbb{R}^{3}$---this is the \emph{circle-plane of the
transform}.  Darboux \cite{Dar99b} proves that an isothermic surface
is special of class $(A,B,C,D)$ if and only if it admits (possibly
complex conjugate) Darboux transforms with distinct spectral
parameters for which the circle-planes coincide.  In this case, there
are three such transforms, the \emph{complementary surfaces}, which
can be constructed algebraically without integrations and whose
spectral parameters are the roots of the cubic equation
\begin{equation}\label{eq:14}
(m-A)^{2}m-Dm+2BC=0.
\end{equation}
Finally, the enveloping surface of this common congruence of
circle-planes is isometric to a quadric whose coefficients depend on
$(A,B,C,D)$ also.

Our purpose in this paper is to show that these special isothermic
surfaces have a simple explanation in terms of the integrable systems
approach to isothermic surfaces.  As fruits of this analysis, we
shall generalise the classical theory to arbitrary codimension and
see that the special isothermic surfaces are a particular case of a
hierarchy of natural classes of isothermic surfaces filtered by an
integer $d$.  In codimension $1$, the first three of these classes
are the maps to $S^{2}$ ($d=0$); the surfaces of constant mean
curvature ($d=1$) and the special isothermic surfaces described above
($d=2$).

Here is the basic idea: an isothermic surface is characterised by the
existence of a pencil of flat metric connections $\nabla^t$ on a
trivial $\mathbb{R}^{n+1,1}$-bundle.  In this formulation, Darboux
transforms with parameter $m$ are $\nabla^m$-parallel null line
subbundles.  Following an idea of the first author and Calderbank
\cite{BurCal}, we say that an isothermic surface is \emph{special of
type $d$} if there is a family $p(t)$ of $\nabla^t$-parallel sections
whose dependence on $t$ is polynomial of degree $d$.

It is already proved in \cite{BurCal} that special isothermic
surfaces of type $1$ are generalised $H$-spaces in a space-form and,
in section \ref{csc}, we prove that, modulo degeneracies, the special
isothermic surfaces of type $2$ in codimension $1$ are precisely the
special isothermic surfaces of Bianchi and Darboux.

In this setting, the complementary surfaces are easy to understand:
since the connections $\nabla^t$ are metric, the polynomial
$(p(t),p(t))$ has constant coefficients (in the classical case, this
is the polynomial \eqref{eq:14}) and the values $p(m)$, at roots $m$,
span the complementary surfaces.

Moreover, the behaviour of special isothermic surfaces under the
various transforms (Darboux, Christoffel, Calapso) of the theory is
straightforward to deduce.  In all cases, one has explicit gauge
transformations, with simple dependence on $t$, relating the pencils of
flat connections of the original surface and the transform and so can
readily construct new families of parallel sections.  In this way, we
efficiently extend results of Bianchi and Calapso to our entire
hierarchy of special isothermic surfaces in arbitrary codimension.
All this is the content of section \ref{Transformations of sis}.

Finally, in section \ref{spherical system and sphere-planes}, we
return to the roots of the subject and show that Darboux's
characterisation via circle-planes extends, \emph{mutatis mutandis},
to special isothermic surfaces of type $2$ in any space-form and
that, moreover, the special isothermic surfaces of type $2$ in
$\mathbb{R}^n$ give rise to surfaces isometric to quadrics.

\section{Preliminaries}\label{preliminaries}

\subsection{Isothermic and special isothermic surfaces}
\label{sec:isoth-spec-isoth}

\begin{definition}
Let $f:\Sigma\to E$ be an immersion of a surface
$\Sigma$ in an $n$-dimensional Riemannian manifold $E$, with
$n\geq3$. $f$ is called an \emph{isothermic surface} if there exist,
away from umbilic points, coordinates $(u,v)$ of $\Sigma$ such that
the first fundamental form $I:=(\D f,\D f)$ can be written as
\begin{equation*}
I=e^{2\theta}(\D u^{2}+\D v^{2}),
\end{equation*}
for some smooth real function $\theta$, and such that
$\frac{\partial}{\partial u}$ and $\frac{\partial}{\partial v}$
diagonalise simultaneously all shape operators.
\end{definition}

In this case $(u,v)$ are called \emph{conformal curvature line
coordinates} of $f$. More invariantly, a surface $f$ is isothermic if there
is a holomorphic quadratic differential $q$ which commutes with the
second fundamental form of $f$: away from zeros of $q$, one can find
a holomorphic coordinate $z$ for which $q^{2,0}=\D z^{2}$ and then
$z=u+iv$ provides the conformal
curvature line coordinates.

As we shall see, the class of isothermic surfaces in the
$n$-sphere $S^{n}$ is M\"{o}bius invariant, thus preserved by
conformal diffeomorphisms.

Quadrics, surfaces of revolution, cones and cylinders in
$\mathbb{R}^{3}$ are examples of isothermic surfaces as are constant
mean curvature surfaces in any $3$-dimensional space form.

The following definition is due to Darboux \cite{Dar99e}, and then
developed by Bianchi \cite{Bia05,Bia05a}.

\begin{definition}
\label{classical definition of special in space-forms}Let
$f:\Sigma\to E$ be an isothermic surface in a
$3$-dimensional space-form $E$ with (sectional) curvature
$K$. Consider conformal curvature line coordinates $(u,v)$ of $f$
with conformal factor $e^{2\theta}$ and principal curvatures $k_{1}$
and $k_{2}$ so that the first and second fundamental forms of $f$ are
given by
\begin{equation*}
I=e^{2\theta}(\D u^{2}+\D v^{2})  \text{ and }
 I\!I=e^{2\theta}(k_{1}\D u^{2}+k_{2}\D v^{2}).
\end{equation*}
$f$ is called a \emph{special isothermic surface} if there are real
constants $A$, $B$, $C$, and $D$ such that
\begin{equation}\label{Bianchi's equation generalized}
e^{2\theta}(H_{u}^{2}+H_{v}^{2})+\frac{1}{4}M^{2}+AM-2BH+CL+D+\frac{1}{4}L^{2}K=0,
\end{equation}
where $H=\frac{k_{1}+k_{2}}{2}$ is the mean curvature of $f$,
$L=e^{2\theta}(k_{1}-k_{2})$ and $M=-HL$.

In this case, we say that $f$ is a \emph{special isothermic surface
of class} $(A,B,C,D)$, with respect to the coordinates $(u,v)$.
\end{definition}

Equation \eqref{Bianchi's equation generalized} will be called the
\emph{Darboux-Bianchi condition}.

Constant mean curvature surfaces are examples of special isothermic
surfaces (in this case the class is not unique, for a given pair of
conformal curvature line coordinates).

\subsection{Conformal submanifold geometry}
\label{sec:conf-subm-geom}

\subsubsection{Conformal geometry of the sphere}
\label{sec:conf-geom-sphere}

We wish to study isothermic and, in particular, special isothermic
surfaces in the $n$-sphere from a conformally invariant view-point.
For this, we find a convenient setting in Darboux's light-cone model
of the conformal $n$-sphere.  So contemplate the light-cone
$\mathcal{L}$ in the Lorentzian vector space $\mathbb{R}^{n+1,1}$ and
its projectivisation $\mathbb{P}(\mathcal{L})$.  This last has a
conformal structure where representative metrics $g_{\sigma}$ arise from
never-zero sections $\sigma$ of the tautological bundle
$\pi:\mathcal{L}\to\mathbb{P}(\mathcal{L})$ via
\begin{equation*}
g_{\sigma}(X,Y)=(\D\sigma(X),\D\sigma(Y)).
\end{equation*}
Among these metrics are constant sectional curvature metrics given by
conic sections: indeed, for $w\in\mathbb{R}^{n+1,1}_{\times}$, set
$E(w):=\{v\in\mathcal{L}:(v,w)=-1\}$ to obtain an $n$-dimensional
submanifold of $\mathbb{R}^{n+1,1}$ with induced metric of constant
sectional curvature $-(w,w)$.  In fact, for $w$ non-null,
orthoprojection onto $\langle w\rangle^{\perp}$ induces an isometry
between $E(w)$ and $\{u\in\langle
w\rangle^{\perp}:(u,u)=-{1}/{(w,w)}\}$ while, when $w$ is null, for
any choice of $v_{0}\in E(w)$, orthoprojection onto $\langle
v_{0},w\rangle^{\perp}$ restricts to an isometry of $E(w)$.  By
construction, the bundle projection $\pi$ restricts to give a
conformal diffeomorphism between $E(w)$ and an open subset of
$\mathbb{P}(\mathcal{L})$.  In particular, choosing unit time-like
$w$ identifies $\mathbb{P}(\mathcal{L})$ with the conformal
$n$-sphere while, for $w$ null, the diffeomorphism
$\mathbb{P}(\mathcal{L})\setminus\{\langle w\rangle\}\cong \langle
v_0,w\rangle^{\perp}$ is essentially stereoprojection.

The M\"obius group $\Mob(n)$ of conformal diffeomorphisms of $S^n$ is
now readily identified, thanks to Liouville's Theorem, with an open
subgroup of the orthogonal group $\mathrm{O}(n+1,1)$ acting in the
obvious way on $\mathbb{P}(\mathcal{L})$.  This action is transitive
with parabolic stabilisers and each such stabiliser has abelian
nilradical so that $S^{n}$ is a \emph{symmetric $R$ space}, c.f.~\cite{BurDonPedPin09}.

This view-point gives the conformal geometry of the sphere a
projective linear flavour.  In particular, $k$-dimensional subspheres
of $S^{n}$ are identified with $(k+1,1)$-planes\footnote{Thus
$(k+2)$-dimensional linear subspaces of $\mathbb{R}^{n+1,1}$ with
induced inner product of signature $(k+1,1)$.} via
\begin{equation*}
V\mapsto\mathbb{P}(\mathcal{L}\cap V)\subset\mathbb{P}(\mathcal{L}).
\end{equation*}
Thus, the set of $k$-spheres in $S^{n}$ is identified with a
Grassmannian and so is a (pseudo-Riemannian) symmetric space for
$\mathrm{O}(n+1,1)$.

\subsubsection{Submanifolds and sphere congruences}
\label{sec:surf-sphere-congr}

A map $f:\Sigma\to S^{n}\cong\mathbb{P}(\mathcal{L})$ is the same
as a null line subbundle $\Lambda$ of the trivial bundle
$\underline{\mathbb{R}}^{n+1,1}=\Sigma\times\mathbb{R}^{n+1,1}$ via
\begin{equation*}
f(x)=\Lambda_x,
\end{equation*}
for all $x\in\Sigma$.  Given such a $\Lambda$, we define
\begin{equation*}
\Lambda^{(1)}:=\langle F,\D F(T\Sigma)\rangle,
\end{equation*}
where $F$ is any lift (never vanishing section) of $\Lambda$.  Then
$\Lambda$ is an immersion if and only if $\Lambda^{(1)}$ is a
subbundle of $\underline{\R}^{n+1,1}$ of rank $\dim\Sigma+1$.

\begin{definition}
A \emph{$k$-sphere congruence} is a map of $\Sigma$ into the space of
$k$-spheres or, equivalently, a subbundle $V$ of
$\underline{\R}^{n+1,1}$ with fibres of signature $(k+1,1)$.

A sphere congruence $V$ is \emph{enveloped by} $\Lambda$ if
$\Lambda^{(1)}\subset V$.

A sphere congruence $V$ is \emph{orthogonal} to $\Lambda$ if
$\Lambda\subset V$ and $\Lambda^{(1)}\subset \Lambda\oplus
V^{\perp}$.
\end{definition}

Here is the geometry: a sphere congruence $V$ is enveloped by
$\Lambda$ if each sphere $\mathbb{P}(\mathcal{L}\cap V_{x})$ is
tangent to the submanifold $\Lambda$ at $x$, for all $x\in\Sigma$.
Similarly, $V$ is orthogonal to $\Lambda$ if the corresponding
$k$-sphere meets the submanifold with orthogonal tangent planes at
each $x\in\Sigma$.

There are many sphere congruences enveloped by an immersion $\Lambda$
but a canonical choice is afforded by the \emph{central sphere
congruence} \cite{Ger31} defined as follows:
\begin{equation*}
V_{csc}:=\langle F,\D F(T\Sigma),\Delta
F\rangle=\Lambda^{(1)}\oplus\langle\Delta F\rangle,
\end{equation*}
where $F$ is a lift of $\Lambda$, and $\Delta F$ is the Laplacian of
$F:\Sigma\to\mathbb{R}^{n+1,1}$, with respect to the
metric $(\D F,\D F)$.  This is easily seen to be independent of
choices.

\subsection{Isothermic surfaces and their transformations}
\label{sec:isoth-surf-their}

\subsubsection{Gauge theoretic formulation}
\label{sec:invar-form}

There is a gauge-theoretic formulation of the isothermic surface
condition \cite{BurDonPedPin09} that will be basic in all that follows.  For
this, we begin by recalling the isomorphism
$\bigwedge^{2}\mathbb{R}^{n+1,1}\cong\sk(\mathbb{R}^{n+1,1})$ given by
\begin{equation*}
(u\wedge v)w=(u,w)v-(v,w)u,
\end{equation*}
for $u,v,w\in\Lor$.
A map $\Lambda$ gives rise to a subbundle of abelian subalgebras
$\Lambda\wedge\Lambda^{\perp}$ of the trivial Lie algebra bundle
$\sk(\underline\R^{n+1,1})$.
\begin{proposition}[{\cite[\S3.2.1]{BurDonPedPin09}\cite[Lemma~8.6.12]{Her03}}]
An immersion $\Lambda:\Sigma\to S^{n}\cong\mathbb{P}(\mathcal{L})$ is
isothermic if and only if there is a non-zero closed $1$-form
$\eta\in\Omega^{1}\otimes\sk(\Lor)$ taking values in
$\Lambda\wedge\Lambda^{\perp}$.
\end{proposition}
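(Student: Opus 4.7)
Pick locally a lift $F:\Sigma\to\mathcal{L}$ of $\Lambda$. Because $F$ is null and lies in $\Lambda^{\perp}$, any $1$-form $\eta\in\Omega^{1}\otimes(\Lambda\wedge\Lambda^{\perp})$ can be written $\eta=F\wedge\omega$ for a $\Lambda^{\perp}$-valued $1$-form $\omega$, unique modulo $\Omega^{1}\otimes\Lambda$. A direct computation gives
$$
d\eta=\D F\wedge\omega+F\wedge\D\omega,
$$
so closedness of $\eta$ becomes $\D F\wedge\omega=-F\wedge\D\omega$.

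For ($\Rightarrow$), I would take conformal curvature line coordinates $(u,v)$ with $I=e^{2\theta}(\D u^{2}+\D v^{2})$, choose the lift with $(\D F,\D F)=I$, and set
$$
\eta:=e^{-2\theta}\,F\wedge(F_{u}\,\D u-F_{v}\,\D v).
$$
Differentiating $(F,F)=0$ twice yields $(F_{u},F_{u})=(F_{v},F_{v})=e^{2\theta}$, $(F_{u},F_{v})=0$ and $(F,F_{uv})=0$, from which
$F_{uv}=aF+\theta_{v}F_{u}+\theta_{u}F_{v}+I\!I(\partial_{u},\partial_{v})$
for some function $a$, the last term a section of the normal bundle. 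In curvature line coordinates $I\!I(\partial_{u},\partial_{v})=0$, so $F_{uv}-\theta_{v}F_{u}-\theta_{u}F_{v}\in\Lambda$, and expanding $d\eta$ collapses it to $-2e^{-2\theta}F\wedge(F_{uv}-\theta_{v}F_{u}-\theta_{u}F_{v})\,\D u\wedge\D v$, which vanishes. Non-vanishing of $\eta$ is clear since $F,F_{u},F_{v}$ are linearly independent.

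For ($\Leftarrow$), I would start from closed $\eta=F\wedge\omega\neq0$ and use the conformal Gauss splitting $\Lambda^{\perp}/\Lambda=(\Lambda^{(1)}/\Lambda)\oplus N$ into tangent and normal pieces to write $\omega\equiv\omega_{T}+\omega_{N}$ modulo $\Lambda$. Projecting the closedness equation onto the natural summands of $\sk(\Lor)$ along this splitting yields, on the one hand, an integrability condition forcing $\omega_{T}$ (viewed as a trace-free symmetric endomorphism of $T\Sigma$) to be the real part of a holomorphic quadratic differential $q$, and, on the other, a commutation relation of $\omega_{T}$ with each shape operator. By the invariant reformulation of the isothermic condition recalled just after Definition~1.1, this says $\Lambda$ is isothermic, and the curvature line coordinates are then the real and imaginary parts of a local holomorphic $z$ with $q=\D z^{2}$.

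The main obstacle is the backward direction: one must extract from the single tensor equation $d\eta=0$ both the holomorphicity of $q$ and its commutation with $I\!I$, which requires careful bookkeeping along the filtration $\Lambda\subset\Lambda^{(1)}\subset\Lambda^{\perp}$ inside $\Lor$ and identification of the tangential component of $\omega$ with a trace-free symmetric quadratic form commuting with the second fundamental form. By contrast, the forward direction is just the explicit computation above in curvature line coordinates.
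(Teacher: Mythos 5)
Your forward direction is correct and is, in substance, exactly the computation underlying the paper's formula \eqref{eq:2}: the $1$-form you write down is $-\eta$ in the paper's normalisation, the term $\D F\wedge\omega$ vanishes identically for this $\omega$, and $d\eta$ reduces to $F\wedge\bigl(F_{uv}-\theta_vF_u-\theta_uF_v\bigr)$ up to a factor, which vanishes because in conformal curvature line coordinates $F_{uv}-\theta_vF_u-\theta_uF_v$ is a multiple of $F$ (its tangential components are fixed by $(F_u,F_u)=(F_v,F_v)=e^{2\theta}$, $(F_u,F_v)=0$, its $\Lambda$-transverse null component by $(F,F_{uv})=0$, and its normal component is $I\!I(\partial_u,\partial_v)=0$ since $\partial_u,\partial_v$ diagonalise all shape operators). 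Note, though, that the paper does not prove this proposition at all: it cites \cite[\S3.2.1]{BurDonPedPin09} and \cite[Lemma~8.6.12]{Her03}, and merely records the dictionary $\half q(X,Y)F=\eta_X\D F(Y)$, $\eta=F\wedge(\D F\circ Q)$, together with the statement that $d\eta=0$ is equivalent to $q$ being a holomorphic quadratic differential commuting with the trace-free second fundamental form.

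The genuine gap is in your converse, which as written is a plan rather than a proof: you name the decomposition and assert that projecting $d\eta=0$ onto its summands yields holomorphicity and commutation, but none of these projections is computed, and it is precisely there that the content lies. To close it, write $\eta=F\wedge\omega$ with $\omega=\D F\circ Q+\omega_N \bmod\Lambda$ ($Q\in\Gamma\End(T\Sigma)$, $\omega_N$ normal-valued) and examine $d\eta=\D F\wedge\omega+F\wedge\D\omega$ componentwise in $\bigwedge^2\Lor$ along $\Lambda\subset\Lambda^{(1)}\subset\Lambda^{\perp}$: (i) the $\D F(T\Sigma)\wedge(\text{normal})$ component of $\D F\wedge\omega$ forces $\omega_N\equiv0\bmod\Lambda$, i.e.\ $\eta\in\Omega^1(\Lambda\wedge\Lambda^{(1)})$ --- a conclusion the paper states separately and which your sketch does not extract; (ii) the $\bigwedge^2\D F(T\Sigma)$ component gives $\tr Q=0$; (iii) the components of the form $F\wedge(\cdot)$, computed via the Gauss--Codazzi splitting of $\D(\D F\circ Q)$, give the symmetry of $q(X,Y)=-2(\D F(QX),\D F(Y))$, its commutation with the trace-free second fundamental form (off-diagonal normal component), and the Cauchy--Riemann equation for $q$ (tangential component). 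Only after (i)--(iii) does the "invariant reformulation" you invoke apply. Without these computations the backward implication has not been established.
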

The classical and gauge-theoretic view-points are related as follows:
given $\eta\in\Omega^1(\Lambda\wedge\Lambda^{\perp})$, we define a
$2$-tensor $q$ on $\Sigma$ by
\begin{equation*}
\half q(X,Y)F=\eta_{X}\D F(Y),
\end{equation*}
for any lift $F$ of $\Lambda$.  This is well-defined and independent
of choices since $\eta\Lambda$ vanishes while $\eta\Lambda^{\perp}$
takes values in $\Lambda$.  Then $\eta$ is closed if and only if $q$
is a holomorphic quadratic differential which commutes with the
trace-free second fundamental form of $\Lambda$ \cite{BurCal}.
Moreover, in this case, $\eta$ takes values in
$\Lambda\wedge\Lambda^{(1)}$ and is uniquely determined by
$q$. Indeed, fix a lift $F$ and let $Q$ be the symmetric trace-free
endomorphism of $T\Sigma$ for which
\begin{equation*}
\half q(X,Y)= (\D F(QX),\D F(Y)).
\end{equation*}
Then
\begin{equation}
\label{eq:1}
\eta=F\wedge\D F\circ Q.
\end{equation}
In particular, if $z=u+iv$ are conformal curvature line coordinates
with $q^{2,0}=\D z^{2}$ and $(\D F,\D F)=e^{2\theta}(\D u^{2}+\D
v^{2})$ then
\begin{equation}
\label{eq:2}
\eta=e^{-2\theta}F\wedge(-F_u\D u+F_v\D v).
\end{equation}
To summarise:
\begin{proposition}\label{Q symmetric and trace-free}
Let $(\Lambda,\eta)$ be an isothermic surface in $S^{n}$.
Then
\begin{enumerate}
\item $\eta\in\Omega^{1}(\Lambda\wedge \Lambda^{(1)})$;
\item For any lift $F$ of $\Lambda$, there is a symmetric, trace-free
$Q\in\Gamma(\End(T\Sigma))$ for which $\eta=F\wedge(\D F\circ Q)$;
\item $\eta$ vanishes on at most a discrete set.
\end{enumerate}
\end{proposition}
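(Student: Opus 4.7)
The plan is to extract $Q$ explicitly from $\eta$ via the holomorphic quadratic differential $q$ introduced in the preceding discussion, and then verify the three assertions in turn. Fix a lift $F$ of $\Lambda$. The formula $\half q(X,Y)F:=\eta_{X}\D F(Y)$ makes sense because, $F$ being null, $(F,\D F(Y))=0$, so $\D F(Y)\in\Lambda^{\perp}$; then the identity $(u\wedge v)(w)=(u,w)v-(v,w)u$ together with $\eta_{X}\in\Lambda\wedge\Lambda^{\perp}$ forces $\eta_{X}\D F(Y)\in\Lambda$. Independence of the choice of lift follows from the same identity, which shows that $\eta_{X}$ annihilates $\Lambda$. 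Applying $(d\eta)(X,Y)=0$ to $F$ and using $\eta_{X}F=0$ then gives $\eta_{X}\D F(Y)=\eta_{Y}\D F(X)$, so $q$ is symmetric.

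To obtain trace-freeness, I would invoke the characterisation already cited from \cite{BurCal}: closedness of $\eta$ is equivalent to $q$ being a holomorphic quadratic differential for the conformal structure of $\Sigma$, and in particular $q$ has vanishing $(1,1)$-part, hence is trace-free with respect to the metric $(\D F,\D F)$. I then define $Q\in\Gamma(\End T\Sigma)$ by $\half q(X,Y)=(\D F(QX),\D F(Y))$: non-degeneracy of the first fundamental form gives existence and uniqueness, and symmetry and trace-freeness of $Q$ transfer directly from those of $q$.

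Finally, I would verify the identity $\eta=F\wedge(\D F\circ Q)$, which simultaneously establishes (1) and the formula in (2). Both sides lie in $\Omega^{1}(\sk(\triv))$, annihilate $\Lambda$, and produce the same vector in $\Lambda$ when evaluated on $\D F(Y)$ by construction. The remaining content is agreement on a direction $n$ orthogonal to $\Lambda^{(1)}$ in $\Lambda^{\perp}$: the right-hand side vanishes since $(F,n)=0=(\D F(QX),n)$, and the vanishing of $\eta_{X}n$ is verified locally by the expression \eqref{eq:2} in conformal curvature line coordinates on the open set where $q$ is non-zero, then extended by continuity. Part (3) is then immediate: $\eta=0$ iff $Q=0$ iff $q=0$, and a non-zero holomorphic quadratic differential on a Riemann surface has discrete zero set.

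The principal obstacle is the conjunction of trace-freeness of $q$ and the vanishing of the normal component of $\eta$; neither follows from $d\eta=0$ tested against $F$ alone. The efficient route is to appeal to the characterisation of closed $\Lambda\wedge\Lambda^{\perp}$-valued $1$-forms from \cite{BurCal}. A self-contained derivation would instead decompose $\eta=F\wedge(\D F\circ Q+\phi^{N})$ with $\phi^{N}$ in the conformal normal bundle of $\Lambda$, and unpack $d\eta(X,Y)$ tested against the full M\"obius frame of $\Lambda$---including $\D F(Z)$ and normal directions---so as to extract both the holomorphy of $q$ and the vanishing of $\phi^{N}$ from the resulting Codazzi- and Ricci-like equations.
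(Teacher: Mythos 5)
Your proposal is correct and follows essentially the same route as the paper, which gives no separate proof: the proposition merely summarises the preceding paragraph, where $q$ is defined by $\half q(X,Y)F=\eta_{X}\D F(Y)$, the equivalence of $\D\eta=0$ with holomorphy of $q$ (hence its trace-freeness and the inclusion $\eta\in\Omega^{1}(\Lambda\wedge\Lambda^{(1)})$) is quoted from \cite{BurCal}, and $Q$ is recovered from $q$ via the nondegeneracy of the first fundamental form --- exactly the two non-elementary steps you also delegate to \cite{BurCal}. The only caution is that your fallback verification of $\eta_{X}n=0$ on normal directions via \eqref{eq:2} would be circular, since \eqref{eq:2} is itself a consequence of \eqref{eq:1}; your proposed self-contained alternative (decomposing $\eta=F\wedge(\D F\circ Q+\phi^{N})$ and testing $\D\eta=0$ against the full frame) is the right repair.
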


This formulation of the isothermic condition is manifestly
conformally invariant: for $T\in \Ort(\mathbb{R}^{n+1,1})$ and
$(\Lambda,\eta)$ isothermic, it is clear that $(T\Lambda,
\Ad(T)\eta)$ is isothermic also.

Like the holomorphic quadratic differential to which it is
equivalent, the $1$-form $\eta$ is defined up to a real constant
scale.  However, unless $\Lambda$ takes values in a $2$-sphere, this
is the only ambiguity: $q$ is fixed (up to scaling by a real
function) by the requirement that it commute with the trace-free
second fundamental form of $\Lambda$, unless the latter vanishes.
The holomorphicity of $q$ then forces that scale to be constant.
Thus:
\begin{proposition}[{\cite[Proposition~2.4]{Bur06}}]
Let $(\Lambda,\eta)$ be an isothermic surface in $S^{n}$. Then
$\eta$ is unique up to (non-zero) real scale if and only if
$\Lambda$ is not contained in any $2$-sphere.
\end{proposition}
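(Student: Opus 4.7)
The plan is to exploit the correspondence from Proposition~\ref{Q symmetric and trace-free}: once a lift $F$ of $\Lambda$ is fixed, every $\eta\in\Omega^{1}(\Lambda\wedge\Lambda^{\perp})$ is determined by a symmetric, trace-free $Q\in\Gamma(\End(T\Sigma))$ via $\eta=F\wedge\D F\circ Q$, and hence by the real quadratic differential $q$ satisfying $\half q(X,Y)=(\D F(QX),\D F(Y))$. By the equivalence quoted from \cite{BurCal}, closedness of $\eta$ amounts to $q$ being a holomorphic quadratic differential that commutes with the trace-free second fundamental form of $\Lambda$. The strategy is then to read off uniqueness of $\eta$ up to real scale from uniqueness of $q$ up to real scale, and to show that the latter is controlled by the trace-free second fundamental form.

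Suppose first that $\Lambda$ takes values in some $2$-sphere, represented by a $(3,1)$-plane $V\subset\Lor$. Then $\Lambda$ is an open piece of a totally umbilic surface in $S^{n}$, so its trace-free second fundamental form vanishes identically. The commutativity constraint on $q$ is then vacuous, and \emph{every} holomorphic quadratic differential on $\Sigma$ produces a valid closed $\eta\in\Omega^{1}(\Lambda\wedge\Lambda^{(1)})$ via \eqref{eq:1}. Since even locally this is an infinite-dimensional real family (e.g.\ $\D z^{2},z\,\D z^{2},z^{2}\,\D z^{2},\ldots$ in any holomorphic chart $z$), $\eta$ is patently not unique up to real scale.

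Conversely, suppose $\Lambda$ is not contained in any $2$-sphere, so its trace-free second fundamental form is nonzero on an open dense set $U\subset\Sigma$. The key linear-algebraic observation is that at each $p\in U$, the space of symmetric trace-free endomorphisms of the $2$-dimensional Euclidean space $T_{p}\Sigma$ is $2$-dimensional over $\R$, and the commutant of any nonzero element is exactly the real line it spans. Pointwise on $U$, this forces any two admissible pairs $(Q_{1},q_{1})$ and $(Q_{2},q_{2})$ to be proportional by a real-valued function $f$: $q_{1}=fq_{2}$.

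Finally, I would upgrade $f$ to a real constant using holomorphicity. In a local holomorphic coordinate $z$, write $q_{i}^{2,0}=\phi_{i}(z)\,\D z^{2}$ with $\phi_{i}$ holomorphic; then $f=\phi_{1}/\phi_{2}$ is meromorphic, and, being real-valued, must be locally constant, hence constant on the connected $\Sigma$. The main subtlety is exactly this last step: $f$ is a priori defined only where the trace-free second fundamental form and $q_{2}$ are both nonzero, and it is the holomorphic extension carried by the quadratic differentials that lets us conclude $f$ is a single real scalar across all of $\Sigma$.
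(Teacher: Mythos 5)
Your argument is correct and is essentially the paper's own: the paper's preceding paragraph sketches exactly this reasoning, namely that $q$ is pinned down up to a real-valued function by commuting with the trace-free second fundamental form (which vanishes precisely when $\Lambda$ lies in a $2$-sphere), and holomorphicity then forces that function to be a real constant. Your filling-in of the pointwise linear algebra (the commutant of a nonzero symmetric trace-free endomorphism of a $2$-plane is the line it spans) and of the meromorphic-real-valued-implies-constant step is a faithful elaboration of the same route.
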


A fundamental observation on which the rich transformation theory of
isothermic surfaces rests is that setting
$(\nabla^{t}:=\D+t\eta)_{t\in\mathbb{R}}$ yields a pencil of flat
metric connections on $\underline{\mathbb{R}}^{n+1,1}$.  Indeed,
since $\Lambda\wedge\Lambda^{\perp}$ is abelian, the curvature of
$\nabla^{t}$ is $t\D\eta$ so that, for a non-zero $1$-form
$\eta\in\Omega^{1}(\Lambda\wedge \Lambda^{\perp})$, $(\Lambda,\eta)$
is isothermic if and only if $(\nabla^{t})_{t\in\mathbb{R}}$ is a
family of flat connections.  We now briefly recall how these
connections provide transforms of isothermic surfaces.

\subsubsection{Darboux transforms}
\label{sec:darb-transf-an}

Let $(\Lambda,\eta)$ be an isothermic surface with pencil of flat
connections $\nabla^{t}$.  Darboux transforms of $\Lambda$ are
spanned by null $\nabla^m$-parallel sections of $\triv$.  More precisely:
\begin{definition}
An immersed surface $\hat{\Lambda}:\Sigma\to
\mathbb{P}(\mathcal{L})$ is a \emph{Darboux transform with parameter
$m$}, $m\in\R^{\times}$, of $\Lambda$ if
\begin{enumerate}
\item $\hat{\Lambda}\cap\Lambda=\{0\}$;
\item $\hat{\Lambda}$ is $(\D+m\eta)$-parallel.
\end{enumerate}
\end{definition}

The point here is that a Darboux transform $\hat\Lambda$ is also
isothermic and there is a simple explicit gauge transformation
relating the two pencils of flat connections.  For this, we define a
family of orthogonal gauge transformations
$\Gamma_{\Lambda}^{\hat{\Lambda}}(c)$, $c\in\R^{\times}$, of $\triv$ by
\begin{equation}\label{gauge transformation}
\Gamma_{\Lambda}^{\hat{\Lambda}}(c)=
\begin{cases}
c & \text{on $\hat{\Lambda}$,}\\
1& \text{on $(\Lambda\oplus\hat{\Lambda})^{\perp}$,}\\
c^{-1} & \text{on $\Lambda$.}
\end{cases}
\end{equation}
We now have:
\begin{proposition}[{\cite[Theorem~3.10]{BurDonPedPin09}}]\label{thm:darboux}
Let $\hat\Lambda$ be a Darboux transform with parameter $m$ of an
isothermic surface $(\Lambda,\eta)$.  Then $\hat\Lambda$ is
isothermic and the $1$-form
$\hat\eta\in\Omega^1(\hat\Lambda\wedge\hat\Lambda^{\perp})$ can be
chosen so that:
\begin{enumerate}
\item
$\Gamma_{\Lambda}^{\hat{\Lambda}}(1-\frac{t}{m})\cdot(\D+t\eta)=\D+t\hat\eta$,
for all $t\neq m$;
\item $(\Lambda,\eta)$ is a Darboux transform with parameter $m$ of $(\hat\Lambda,\hat\eta)$.
\end{enumerate}
\end{proposition}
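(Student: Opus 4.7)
The plan is to establish (1) by direct computation in an adapted frame; once this is in hand, the isothermicity of $(\hat\Lambda,\hat\eta)$ and assertion (2) will follow easily. The key observation is that the Darboux-transform condition $(\D+m\eta)\hat\Lambda\subset\hat\Lambda$ imposes a precise relation between the transverse derivatives of lifts of $\Lambda$ and $\hat\Lambda$, and that this is exactly the ingredient that makes the gauge transformation in (1) work out.

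For the setup, I would pick lifts $F,\hat F$ of $\Lambda,\hat\Lambda$ normalised so that $(F,\hat F)=-1$ (such lifts exist because, for $\Gamma_{\Lambda}^{\hat\Lambda}$ to be defined, $\Lambda\oplus\hat\Lambda$ must be Lorentzian), and use the corresponding splitting $\triv=\Lambda\oplus U\oplus\hat\Lambda$ with $U:=(\Lambda\oplus\hat\Lambda)^{\perp}$. Differentiating the normalisations forces $\D F=\alpha F+X$ and $\D\hat F=-\alpha\hat F+\hat X$ for a scalar $1$-form $\alpha$ and $U$-valued $1$-forms $X,\hat X$. From the formula $\eta=F\wedge(\D F\circ Q)$ of \eqref{eq:1}, a short computation gives $\eta F=0$, $\eta\hat F=-X\circ Q$, and $\eta w=-(w,X\circ Q)F$ for $w\in\Gamma(U)$. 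Matching $U$-components in $(\D+m\eta)\hat F\in\Gamma(\hat\Lambda)$ then yields the crucial identity
\begin{equation*}
\hat X = m\, X\circ Q.
\end{equation*}

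The heart of the argument is the direct expansion of the gauged connection form. Writing $\Gamma:=\Gamma_{\Lambda}^{\hat\Lambda}(c)$ with $c:=1-t/m$, the gauge $\Gamma$ is diagonal in the splitting (with eigenvalues $c^{-1},1,c$ on $\Lambda,U,\hat\Lambda$), so $\Gamma\eta\Gamma^{-1}$ and $(\D\Gamma)\Gamma^{-1}$ can be expanded summand by summand on a general section $\psi=aF+w+b\hat F$. A quick check gives $\Gamma\eta\Gamma^{-1}=c^{-1}\eta$. For $(\D\Gamma)\Gamma^{-1}$, I would compute $\D\psi$ using $\D F,\D\hat F$ and the identities $(\D w,F)=-(w,X)$, $(\D w,\hat F)=-(w,\hat X)$ for the tangential parts of $\D w$; substituting $\hat X=m\,X\circ Q$ and collecting by summand, all the terms involving $X\circ Q$ cancel exactly against $t\Gamma\eta\Gamma^{-1}$, leaving
\begin{equation*}
\bigl(\Gamma\cdot(\D+t\eta)-\D\bigr)(\psi) = -\tfrac{t}{m}\bigl[aX+(w,X)\hat F\bigr] = \tfrac{t}{m}(\hat F\wedge X)(\psi).
\end{equation*}
This proves (1) with $\hat\eta:=\tfrac{1}{m}\hat F\wedge X$, and since $\hat F\in\Gamma(\hat\Lambda)$ and $X$ takes values in $U\subset\hat\Lambda^{\perp}$, we have $\hat\eta\in\Omega^{1}(\hat\Lambda\wedge\hat\Lambda^{\perp})$.

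For the remaining claims: gauge transformations preserve flatness, so $\D+t\hat\eta$ is flat for all $t\neq m$; since $\hat\Lambda\wedge\hat\Lambda^{\perp}$ is abelian (by the same argument used for $\Lambda\wedge\Lambda^{\perp}$), $\hat\eta\wedge\hat\eta=0$, forcing $\D\hat\eta=0$ and confirming that $(\hat\Lambda,\hat\eta)$ is isothermic. For (2), a one-line computation gives $(\hat F\wedge X)(F)=(\hat F,F)X-(X,F)\hat F=-X$, so $m\hat\eta F=-X$ and hence $(\D+m\hat\eta)F=\alpha F\in\Gamma(\Lambda)$, showing that $\Lambda$ is $(\D+m\hat\eta)$-parallel; combined with $\Lambda\cap\hat\Lambda=0$, this is exactly the Darboux-transform condition. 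The main obstacle is the bookkeeping in expanding $(\D\Gamma)\Gamma^{-1}$ and identifying which $c$-dependent coefficients conspire to give the clean $1/m$ after the substitution $\hat X=m\,X\circ Q$; once this cancellation mechanism is spotted, the remainder is routine.
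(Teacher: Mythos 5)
Your argument is correct, but note that there is nothing in this paper to compare it against: Proposition~\ref{thm:darboux} is imported verbatim from \cite[Theorem~3.10]{BurDonPedPin09} and no proof is given here. I have checked your computation and it goes through: with normalised lifts $(F,\hat F)=-1$ one gets $\D F=\alpha F+X$, $\D\hat F=-\alpha\hat F+\hat X$, the $\nabla^m$-parallelity of $\hat\Lambda$ forces $\hat X=m\,X\circ Q$, and then, using $c-1=-t/m$ and $c^{-1}-1=\tfrac{t}{m}c^{-1}$, the block-by-block expansion of $\Gamma\circ(\D+t\eta)\circ\Gamma^{-1}-\D$ leaves exactly $-\tfrac{t}{m}\bigl(aX+(w,X)\hat F\bigr)=\tfrac{t}{m}(\hat F\wedge X)\psi$, so $\hat\eta=\tfrac1m\hat F\wedge X$; closedness via flatness of the gauged pencil together with $\hat\eta\wedge\hat\eta=0$, and part (2) via $m\hat\eta F=-X$, are both fine. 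This is in effect a frame-level transcription of the argument in the cited reference, where $\Gamma_{\Lambda}^{\hat\Lambda}(c)$ is recognised as the exponential of the grading element for the decomposition of $\sk(\Lor)$ determined by the pair $\Lambda,\hat\Lambda$, acting by $c^{-k}$ on the degree-$k$ summand; your explicit cancellation of the $X\circ Q$ terms is precisely the statement that the degree $-1$ part of $\D$ gets rescaled by $c$ while $t\eta$ (degree $+1$) gets rescaled by $c^{-1}$. Two small points you should make explicit: the normalised lifts exist because two distinct null lines in a Lorentzian space are never orthogonal (equivalently, $\Lambda\oplus\hat\Lambda$ has signature $(1,1)$, which you do mention in passing); and $\hat\eta$ is genuinely non-zero because $X$, being $\D F$ modulo $\Lambda$, is nowhere vanishing since $\Lambda$ immerses — this is needed to conclude that $(\hat\Lambda,\hat\eta)$ is isothermic in the sense of the gauge-theoretic characterisation.
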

Here the action of the gauge transformation on connections is the
usual left action: $\Gamma_{\Lambda}^{\hat{\Lambda}}(1-\frac{t}{m})\cdot(\D+t\eta)
=\Gamma_{\Lambda}^{\hat{\Lambda}}(1-\frac{t}{m})
\circ(\D+t\eta)\circ\Gamma_{\Lambda}^{\hat{\Lambda}}(1-\frac{t}{m})^{-1}$.
\begin{remark}
The prescription of $\hat\eta$ in Proposition~\ref{thm:darboux}
amounts to the demand that the two holomorphic quadratic
differentials coincide: $q=\hat q$.
\end{remark}

\subsubsection{Christoffel transforms}\label{Christoffel
transforms}

Fix a pair $v_{\infty},v_{0}\in\mathcal{L}$ with
$(v_{0},v_{\infty})=-1$ so that $v_0\in E(v_{\infty})$ and
conversely.  Set $\R^{n}=\langle v_0,v_{\infty}\rangle^{\perp}$.  Orthoprojection is an isometry
$E(v_{\infty})\to\R^{n}$ with inverse given by
\begin{equation*}
x\mapsto\exp(x\wedge v_\infty)v_{0}=v_0+x+\half(x,x)v_{\infty}.
\end{equation*}

Now let $(\Lambda,\eta)$ be isothermic with lift $F=\exp(f\wedge
v_{\infty})v_{0}:\Sigma\to E(v_{\infty})$.  Then we can write
\begin{equation*}
\eta=\Ad(\exp(f\wedge v_{\infty}))\,\omega,
\end{equation*}
for $\omega\in\Omega^1(\Span{v_0}\wedge\Span{v_0}^{\perp})$.  It
follows from $\D\eta=0$ that $\D\omega=0$
\cite[Proposition~3.3]{BurDonPedPin09} so that locally we have $\omega=\D
f^{c}\wedge v_{0}$, for a map $f^{c}:\Sigma\to\R^n$ defined up to a
translation.

We now swop the roles of $f$ and $f^{c}$, $v_{0}$ and $v_{\infty}$ to
make:
\begin{definition}
The \emph{Christoffel transforms} of $(\Lambda,\eta)$, with respect
to the pair $(v_{\infty},v_{0})$, are the surfaces
$(\Lambda^{c}:=\langle
F^{c}\rangle,\eta^c)$, where
\begin{align*}
F^{c}&=\exp(f^c\wedge v_{0})v_{\infty}:\Sigma\to E(v_0)\\
\eta^c&=\Ad(\exp(f^{c}\wedge v_0))\,(\D f\wedge v_{\infty}).
\end{align*}
\end{definition}
We have:
\begin{proposition}[{\cite[Lemma~3.13]{BurDonPedPin09}}]
Let $(\Lambda^{c},\eta^{c})$ be a Christoffel transform of
$(\Lambda,\eta)$.  Then
\begin{enumerate}
\item $(\Lambda^{c},\eta^{c})$ is an isothermic surface;
\item For all $t\in\R^{\times}$,
$\Gamma^{c}(t)\cdot(\D+t\eta)=\D+t\eta^{c}$, where the gauge
transformation $\Gamma^{c}(t)$ is given by
\begin{equation*}
\Gamma^{c}(t)=\exp(f^{c}\wedge v_{0})\circ \Gamma_{\langle
v_{0}\rangle}^{\langle v_{\infty}\rangle}(t)\circ \exp(-f\wedge
v_{\infty});
\end{equation*}
\item $(\Lambda,\eta)$ is a Christoffel transform of $(\Lambda^{c},\eta^c)$.
\end{enumerate}
\end{proposition}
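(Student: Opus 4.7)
The plan is to prove (2) by direct gauge computation and then derive (1) and (3) from it. Write $\Gamma^{c}(t) = g_{3}\circ g_{2}\circ g_{1}$ with $g_{1} = \exp(-f\wedge v_{\infty})$, $g_{2} = \Gamma^{\langle v_{\infty}\rangle}_{\langle v_{0}\rangle}(t)$, and $g_{3} = \exp(f^{c}\wedge v_{0})$, and compose their actions on $\D + t\eta$ one stage at a time. The key observation is that both $\Span{v_{\infty}}\wedge\Span{v_{\infty}}^{\perp}$ and $\Span{v_{0}}\wedge\Span{v_{0}}^{\perp}$ are abelian subalgebras of $\sk(\Lor)$, so $f\wedge v_{\infty}$ and $f^{c}\wedge v_{0}$ commute with their own exterior derivatives. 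This yields clean Maurer-Cartan corrections $(\D g_{1})g_{1}^{-1} = -\D f\wedge v_{\infty}$ and $(\D g_{3})g_{3}^{-1} = \D f^{c}\wedge v_{0}$.

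Applying $g\cdot(\D+A) = \D + \Ad(g)A - (\D g)g^{-1}$ three times: since $\eta = \Ad(g_{1}^{-1})(\D f^{c}\wedge v_{0})$, step one gives $\D + t(\D f^{c}\wedge v_{0}) + \D f\wedge v_{\infty}$; the constant gauge $g_{2}$ scales $v_{0}$ by $t^{-1}$ and $v_{\infty}$ by $t$ (while fixing $\R^{n}$), converting this to $\D + (\D f^{c}\wedge v_{0}) + t(\D f\wedge v_{\infty})$; finally $g_{3}$ fixes $\D f^{c}\wedge v_{0}$ by abelianness, sends $\D f\wedge v_{\infty}$ to $\eta^{c}$ by definition, and its own Maurer-Cartan term cancels the leftover $\D f^{c}\wedge v_{0}$, leaving exactly $\D + t\eta^{c}$.

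Parts (1) and (3) then follow. Each $g_{i}$ is orthogonal, so $\D + t\eta^{c}$ is gauge-equivalent to the flat metric connection $\D + t\eta$ and hence is itself flat for all $t$; since $\eta^{c}$ manifestly takes values in $\Ad(g_{3})(\Span{v_{\infty}}\wedge\Span{v_{\infty}}^{\perp}) = \Lambda^{c}\wedge(\Lambda^{c})^{\perp}$ and this subbundle is abelian, flatness of the pencil forces $\D\eta^{c} = 0$, and the gauge-theoretic characterisation of isothermic surfaces gives (1). For (3), the defining formula $\eta^{c} = \Ad(\exp(f^{c}\wedge v_{0}))(\D f\wedge v_{\infty})$ is already the Christoffel relation for $(\Lambda^{c},\eta^{c})$ relative to the swapped pair $(v_{0},v_{\infty})$ with $f$ playing the role of the dual map, so iterating the construction returns $(\Lambda,\eta)$. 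The main obstacle is the bookkeeping in step (2): tracking signs, composition order, and the interplay of the reciprocal rescalings under $\Ad(g_{2})$ with the two summands—but once this algebra is in hand the proposition falls out immediately.
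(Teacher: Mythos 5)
Your computation is correct, and in fact the paper offers no proof of this proposition at all --- it is quoted verbatim from \cite[Lemma~3.13]{BurDonPedPin09} --- so your three-stage gauge calculation (using the abelianness of $\Span{v_\infty}\wedge\Span{v_\infty}^{\perp}$ and $\Span{v_0}\wedge\Span{v_0}^{\perp}$ to get the Maurer--Cartan terms, the reciprocal rescalings under $\Ad(\Gamma_{\langle v_0\rangle}^{\langle v_\infty\rangle}(t))$, and the cancellation of $\D f^c\wedge v_0$ at the last stage) supplies exactly the argument the paper leaves to the reference, and I have verified each step. The only point worth a passing remark is that invoking the gauge-theoretic characterisation for (1) also requires $\eta^c\neq0$ and $\Lambda^c$ immersed, which hold off the discrete zero set of $\eta$ since $\D f^c$ vanishes precisely where $\eta$ does (Proposition~\ref{Q symmetric and trace-free}); this is standard and does not affect the argument.
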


The geometry behind all this is that the stereoprojection $f^{c}$ of
a Christoffel transform has parallel tangent planes to those of $f$
and induces the same conformal structure on $\Sigma$ as $f$ while
inducing the opposite orientation on $T\Sigma$ \cite{Chr67,Pal88}.

\subsubsection{T-transforms}

Let $(\Lambda,\eta)$ be an isothermic surface in $S^{n}$. Since each
$\nabla^{s}=\D+s\eta$ is a flat, metric connection on $\triv$, there
are local orthogonal gauge transformations $\Phi_{s}$, for each
$s\in\R$, with $\Phi_s\cdot\nabla^{s}=\D$.  With this understood, we have:
\begin{definition}The \emph{$T$-transforms} of $(\Lambda,\eta)$ are the surfaces
$(\Lambda_{s},\eta_{s})$, $s\in\mathbb{R}$, where
$\Lambda_{s}=\Phi_s\Lambda$ and $\eta_{s}=\Ad(\Phi_{s})\eta$.
\end{definition}
Note that each $\Phi_{s}$, and so each $T$-transform $\Lambda_s$, is
defined up to the action of the M\"obius group.

We have:
\begin{proposition}[{\cite[\S3.3]{BurDonPedPin09}}]
Let $(\Lambda_{s},\eta_{s})$ be a $T$-transform of
$(\Lambda,\eta)$, $s\in\R$.  Then
\begin{enumerate}
\item $(\Lambda_s,\eta_s)$ is an isothermic surface;
\item For all $t\in\R$,
$\Phi_{s}\cdot(\D+(t+s)\eta)=\D+t\eta_{s}$.
\end{enumerate}
\end{proposition}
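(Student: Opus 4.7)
The plan is to prove part (2) first by direct computation and then derive part (1) as a consequence, using the flatness of the pencil on the source side.

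For part (2), I would start from the defining property $\Phi_s\cdot\nabla^s=\D$. Unpacking the gauge action $g\cdot(\D+A)=\D+\Ad(g)A-(\D g)g^{-1}$ applied to $\nabla^s=\D+s\eta$ yields the identity
\begin{equation*}
(\D\Phi_s)\Phi_s^{-1}=s\,\Ad(\Phi_s)\eta = s\eta_s.
\end{equation*}
Applying the same gauge action to $\nabla^{t+s}=\D+(t+s)\eta$ and substituting this identity then gives
\begin{equation*}
\Phi_s\cdot(\D+(t+s)\eta)=\D+(t+s)\eta_s-s\eta_s=\D+t\eta_s,
\end{equation*}
which is exactly (2). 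This step is essentially a one-line calculation once the gauge action convention is fixed.

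For part (1), I would check the three ingredients of the isothermic condition in turn. First, $\eta_s$ takes values in $\Lambda_s\wedge\Lambda_s^\perp$: since $\Phi_s$ is orthogonal, $\Ad(\Phi_s)$ sends $\Lambda\wedge\Lambda^\perp$ to $(\Phi_s\Lambda)\wedge(\Phi_s\Lambda)^\perp=\Lambda_s\wedge\Lambda_s^\perp$. Second, $\Lambda_s$ is an immersion: for a lift $F$ of $\Lambda$, the lift $\Phi_s F$ of $\Lambda_s$ satisfies $\D(\Phi_s F)=(\D\Phi_s)F+\Phi_s\D F=s\eta_s(\Phi_sF)\cdot\Phi_s F+\Phi_s\D F$; since $\eta$ annihilates $\Lambda$, the first term vanishes and so $\Lambda_s^{(1)}=\Phi_s\Lambda^{(1)}$ has rank $\dim\Sigma+1$. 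Third, $\eta_s$ is closed: by (2), $\D+t\eta_s$ is gauge equivalent to the flat connection $\nabla^{t+s}$, hence flat for every $t\in\R$; since $\Lambda_s\wedge\Lambda_s^\perp$ is abelian we have $[\eta_s,\eta_s]=0$, so the curvature reduces to $t\,\D\eta_s$, and vanishing for all $t$ forces $\D\eta_s=0$. A final appeal to Proposition~\ref{Q symmetric and trace-free} (or more precisely to the characterisation of the isothermic condition via the existence of a closed $\Lambda\wedge\Lambda^\perp$-valued $1$-form) then completes (1).

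There is no serious obstacle here; the only point requiring care is keeping the gauge action conventions consistent, since a sign error in the formula for $(\D\Phi_s)\Phi_s^{-1}$ would propagate into (2). The argument is otherwise a direct translation of the isothermic condition across the orthogonal gauge transformation $\Phi_s$, with (2) providing the link that transports flatness of the pencil on the $\eta$-side to flatness of the pencil on the $\eta_s$-side, from which closedness of $\eta_s$ is forced.
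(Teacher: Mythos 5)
The paper does not actually prove this proposition---it is quoted from \cite[\S3.3]{BurDonPedPin09}---so there is no in-paper argument to compare against; your proof is correct and is precisely the computation that the surrounding framework intends: part (2) from $(\D\Phi_s)\Phi_s^{-1}=s\eta_s$, and part (1) from orthogonality of $\Phi_s$ (placement of $\eta_s$ in $\Lambda_s\wedge\Lambda_s^{\perp}$ and the immersion property) together with the abelian-curvature argument forcing $\D\eta_s=0$. The only blemish is notational: in the immersion step $(\D_X\Phi_s)F$ should read $s(\eta_s)_X(\Phi_sF)$ rather than ``$s\eta_s(\Phi_sF)\cdot\Phi_sF$'', and the cleanest justification for its vanishing is that $\eta_s$, being valued in $\Lambda_s\wedge\Lambda_s^{\perp}$, annihilates $\Lambda_s$ (which is equivalent to your statement that $\eta$ annihilates $\Lambda$).
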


This spectral deformation of isothermic surfaces coincides with that
introduced, independently, by Calapso \cite{Cal03} and Bianchi
\cite{Bia05a} for the case $n=3$ and extended to higher codimension by
Burstall \cite{Bur06} and Schief \cite{Sch01}.

\section{Special isothermic surfaces of type $d$}

We now come to the central idea of this paper: consider an isothermic
surface $(\Lambda,\eta)$ with its pencil of flat connections
$\nabla^t$.  The theory of ordinary differential equations ensure
that we may find $\nabla^{t}$-parallel sections depending smoothly on
the spectral parameter $t$.  However, the existence of such sections
with \emph{polynomial} dependence on $t$ is a condition on $\Lambda$
of geometric significance.  Following Burstall--Calderbank
\cite{BurCal}, we are therefore led to make the following definition:

\begin{definition}Let $(\Lambda,\eta)$ be an isothermic surface in $S^{n}$ and
let $p(t)\in\Gamma(\underline{\mathbb{R}}^{n+1,1})[t]$. The
polynomial $p(t)$ is called a \emph{polynomial conserved quantity}
of $(\Lambda,\eta)$ if $p(t)$ is non-zero and
$\nabla^{t}p(t)\equiv0$.
\end{definition}

\begin{proposition}\label{csc}Let $(\Lambda,\eta)$ be an isothermic surface in $S^{n}$. If
$p(t)=\sum_{k=0}^d p_{k}t^{k}$ is a polynomial conserved quantity of
$(\Lambda,\eta)$ with degree $d\in\mathbb{N}_{0}$, then
\begin{enumerate}
\item $p_{0}$ is constant;
\item $p_{d}$ is a parallel section of $V_{csc}^{\perp}$, with respect
to the normal connection;
\item the polynomial $(p(t),p(t))\in\mathbb{R}[t]$, that is, $(p(t),p(t))$
has constant coefficients.
\end{enumerate}
\end{proposition}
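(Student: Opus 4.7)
The approach is to expand $\nabla^{t}p(t)=0$ in powers of $t$. Writing $\nabla^{t}=\D+t\eta$ and collecting terms gives
\begin{equation*}
0 = \D p_{0} + \sum_{k=1}^{d}(\D p_{k} + \eta p_{k-1})\,t^{k} + (\eta p_{d})\,t^{d+1},
\end{equation*}
which forces $\D p_{0}=0$ (yielding (1)), the recursion $\D p_{k}=-\eta p_{k-1}$ for $1\le k\le d$, and the top-degree constraint $\eta p_{d}=0$. Item (3) is then immediate from the fact that each $\nabla^{t}$ is a metric connection on $\triv$: we have
\begin{equation*}
\D(p(t),p(t)) = 2(\nabla^{t}p(t),p(t)) = 0
\end{equation*}
as an identity in $t$, so every coefficient of the polynomial $(p(t),p(t))\in C^{\infty}(\Sigma)[t]$ is constant.

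The real work lies in (2), which I would split into two steps: first show $p_{d}\in V_{csc}^{\perp}$, then show it is normal-parallel. For the first, $\eta p_{d}=0$ together with Proposition~\ref{Q symmetric and trace-free} gives
\begin{equation*}
\eta_{X}p_{d} = (F,p_{d})\,\D F(QX) - (\D F(QX),p_{d})\,F = 0
\end{equation*}
for all $X\in T\Sigma$. Since $F$ and $\D F(T\Sigma)$ are pointwise linearly independent and $Q$ is invertible off the discrete zero set of $\eta$, one reads off $(F,p_{d})=0$ and $(\D F(Y),p_{d})=0$ for all tangent $Y$ on a dense open set, hence everywhere, so $p_{d}\perp\Lambda^{(1)}$. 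To obtain $p_{d}\perp\Delta F$, I would pass to conformal curvature line coordinates $(u,v)$ in which $(F_{u},F_{u})=(F_{v},F_{v})=e^{2\theta}$ and the remaining inner products among $F, F_{u}, F_{v}$ vanish, use formula \eqref{eq:2} for $\eta$ to compute $(p_{d})_{u}$ and $(p_{d})_{v}$ from the recursion $\D p_{d}=-\eta p_{d-1}$, and then differentiate $(F_{u},p_{d})=(F_{v},p_{d})=0$ along $\partial_{u}, \partial_{v}$ respectively. The two contributions cancel to give $(F_{uu}+F_{vv},p_{d})=0$, which is $(\Delta F,p_{d})=0$, completing the verification that $p_{d}\perp V_{csc}$.

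Parallelism is then essentially free: $\D p_{d}=-\eta p_{d-1}$ takes values in $\Lambda^{(1)}\subset V_{csc}$ (since $\eta$ is valued in $\Lambda\wedge\Lambda^{(1)}$), so its projection onto $V_{csc}^{\perp}$ is zero, i.e.\ $\nabla^{N}p_{d}=0$. The sole nontrivial calculation is the short coordinate computation establishing $(\Delta F,p_{d})=0$; everything else reduces to formal manipulation of the recursion together with the Lie-algebraic form of $\eta$ and the metric property of $\nabla^{t}$.
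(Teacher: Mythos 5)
Your proposal is correct and follows essentially the same route as the paper: expand $\nabla^{t}p(t)=0$ in powers of $t$, read off $\D p_{0}=0$, $\eta p_{d}=0$ and $\D p_{d}=-\eta p_{d-1}$, use the form $\eta=F\wedge(\D F\circ Q)$ to get $p_{d}\perp\Lambda^{(1)}$, and deduce (3) from the metric property of $\nabla^{t}$. The only divergence is the verification that $(\Delta F,p_{d})=0$: the paper does this invariantly, computing $(\D{*}\D F,p_{d})=-(F,p_{d-1})({*}\D F\wedge\D F\circ Q)$ and invoking the trace-freeness of $Q$, whereas you carry out the equivalent computation in conformal curvature line coordinates, where the cancellation between the $u$- and $v$-contributions (the signs $\mp$ in \eqref{eq:2}) is exactly that trace-free condition; both versions check out.
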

\begin{proof}
Write $p(t)=\sum_{k=0}^d p_{k}t^{k}$ with $p_d$ non-zero.
Evaluating $(\D+t\eta)p(t)$ at $t=0$ we obtain $\D p_{0}=0$.  The top
two coefficients of $\nabla^tp(t)\equiv0$ read
\begin{subequations}
\begin{align}
0&=\eta p_d\label{eq:3}\\
0&=\D p_d+\eta p_{d-1},\label{eq:4}
\end{align}
\end{subequations}
where we have set $p_{-1}=0$. Fix a lift $F$ of
$\Lambda$. Proposition \ref{Q symmetric and trace-free} tells us that
we may write $\eta=F\wedge(\D F\circ Q)$ with
$Q\in\Gamma(\End(T\Sigma))$ symmetric, trace-free and bijective off a
discrete subset of $\Sigma$.  Now \eqref{eq:3} reads
\begin{equation*}
(F,p_{d})\D F\circ Q-(\D F\circ Q,p_{d})F=0,
\end{equation*}
and, since $F$ immerses, we see that $(p_{d},F)=0$ and $(\D F\circ
Q,p_{d})=0$ so that $(\D F,p_{d})=0$) off the zero-set of $Q$. Hence
$p_{d}\in\Gamma{\Lambda^{(1)}}^{\perp}$.

To see that $p_d$ takes values in $V_{csc}$, it remains to show that
$(\D*\D F,p_d)=0$, where $*$ is the Hodge star operator on $\Sigma$.
Now \eqref{eq:4} reads:
\begin{equation}\label{eq:5}
\D p_d+(F,p_{d-1})\D F\circ Q - (\D F\circ Q,p_{d-1})F=0,
\end{equation}
so that
\begin{align*}
(\D*\D F, p_d) &= \D(*\D F, p_d)-(*\D F\wedge \D p_d)\\
&=-(F,p_{d-1})(*\D F\wedge\D F\circ Q)
\end{align*}
which last vanishes since $Q$ is trace-free.
Thus, $p_{d}\in\Gamma V_{csc}^{\perp}$.  Moreover, from
\eqref{eq:5}, we see that $\D
p_{d}\in\Omega^{1}(\Lambda^{(1)})\subseteq\Omega^{1}(V_{csc})$,
whence $\nabla^{\perp}p_{d}=0$, since the normal connection
$\nabla^{\perp}$ is just the $V_{csc}^{\perp}$-component of $\D$.

Finally, since each $\nabla^{t}=\D+t\eta$ is a metric connection, and
$\nabla^{t}p(t)\equiv0$, we get
$\der(p(t),p(t))=2(\nabla^{t}p(t),p(t))\equiv0$.
\end{proof}
In particular, $p_{d}$ is always space-like while, $p_0$, if it is
non-zero, defines a space-form $E(p_0)$.

Observe that if $n=3$, all top terms of polynomial conserved
quantities are constant multiples of each other, so that whenever
there are two linearly independent polynomial conserved quantities of
degree $d$, there is necessarily a polynomial conserved quantity of
degree $d-1$.

\begin{definition}\label{definition}
An isothermic surface $(\Lambda,\eta)$ in $S^n$ is a \emph{special
isothermic surface of type $d$} (with respect to $p(t)$) if it admits
a polynomial conserved quantity $p(t)$ of degree $d$.

Moreover, for $w\in\Lor_{\times}$, we say that $(\Lambda,\eta)$ is
\emph{special isothermic of type $d$ in the space-form $E(w)$} if
$p_{0}\in\Span{w}$.
\end{definition}

The cases $d=0$ and $d=1$ have simple interpretations to which we now
turn.  Recall that a submanifold of $S^n$ is \emph{full} if it does
not lie in any proper sub-sphere.  We have:

\begin{proposition}An isothermic surface
$(\Lambda,\eta)$ in $S^{n}$ is a special isothermic surface of type $0$ if and
only if $\Lambda$ is not full.
\end{proposition}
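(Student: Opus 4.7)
The idea is simply to unpack what a degree-zero polynomial conserved quantity is: $p(t)=p_{0}$ satisfies $\nabla^{t}p_{0}\equiv 0$ for all $t$ exactly when $\D p_{0}=0$ and $\eta\, p_{0}=0$.

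For the forward direction, suppose $(\Lambda,\eta)$ is special of type $0$ with respect to $p_{0}$. Proposition~\ref{csc} applied with $d=0$ already tells us that $p_{0}$ is a constant non-zero space-like vector and, from the analysis there of the equation $\eta p_{d}=0$, that $p_{0}\in\Gamma(\Lambda^{(1)\perp})$. In particular, any lift $F$ of $\Lambda$ satisfies $(F,p_{0})=0$, so $\Lambda$ takes values in $\Span{p_{0}}^{\perp}$. Since $p_{0}$ is space-like, $\Span{p_{0}}^{\perp}$ is a proper Lorentzian subspace of $\Lor$ of signature $(n,1)$, so under the identification of \S\ref{sec:conf-geom-sphere} it determines a proper $(n-1)$-sphere in $S^{n}$ containing $\Lambda$; hence $\Lambda$ is not full.

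For the converse, suppose $\Lambda$ is not full. By the correspondence between sub-spheres and Lorentzian subspaces recalled in \S\ref{sec:conf-geom-sphere}, there is a proper $(k+1,1)$-plane $V\subsetneq\Lor$ with $\Lambda\subset\mathbb{P}(\mathcal{L}\cap V)$. Then $V^{\perp}$ is positive-definite and non-zero, so pick any $0\neq p_{0}\in V^{\perp}$: this is constant and space-like. Choose a lift $F$ of $\Lambda$ with values in $V$; then $\D F$ is also $V$-valued, so the formula $\eta=F\wedge(\D F\circ Q)$ from Proposition~\ref{Q symmetric and trace-free} gives
\begin{equation*}
\eta\, p_{0}=(F,p_{0})\,\D F\circ Q-(\D F\circ Q,p_{0})\,F=0,
\end{equation*}
since both pairings with $p_{0}$ vanish. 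Thus $p(t):=p_{0}$ is a polynomial conserved quantity of degree $0$.

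The hard part, if any, is mild: one must be sure that ``not full'' delivers a \emph{Lorentzian} subspace rather than a linear or degenerate one, but this is exactly how sub-spheres were defined in \S\ref{sec:conf-geom-sphere}; correspondingly, on the other side of the equivalence, the space-likeness of $p_{0}$ from Proposition~\ref{csc} is what rules out degenerate hyperplanes and guarantees the sub-sphere is genuine.
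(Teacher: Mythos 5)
Your proof is correct and follows essentially the same route as the paper: the forward direction invokes Proposition~\ref{csc} to place the constant, space-like $p_{0}$ in $V_{csc}^{\perp}$ (hence $\Lambda\subset\Span{p_0}^{\perp}$), and the converse takes $p_{0}\in V^{\perp}$ and checks $\eta p_{0}=0$. The only difference is that you spell out the computation $\eta p_{0}=(F,p_{0})\,\D F\circ Q-(\D F\circ Q,p_{0})\,F=0$ that the paper dismisses as ``easy to see''.
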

\begin{proof}
Suppose that $(\Lambda,\eta)$ is a special isothermic surface
of type $0$, with respect to a polynomial $p(t)=p_0$. By
Proposition~\ref{csc}, $p_{0}$ is a constant in
$V_{csc}^{\perp}$. Set $W=\Span{p_0}^{\perp}$ so that $W$ is an
$(n,1)$-plane with $\Lambda\subset V_{csc}\subset W$ and $\Lambda$
has image in the $(n-1)$-sphere $\mathbb{P}(\mathcal{L}\cap W)$.

For the converse, if $\Lambda$ has image in the subsphere
$\mathbb{P}(\mathcal{L}\cap W)$, for some $(n,1)$-plane
$W=\Span{p_{0}}^{\perp}$, it is easy to see that $p(t):=p_{0}$ is a
polynomial conserved quantity of $(\Lambda,\eta)$.
\end{proof}

Let us now contemplate the case $d=1$.  Recall that a surface $F$ in a
space-form is a \emph{generalised $H$-surface} if it admits a
parallel unit normal vector field which has constant inner product
with the mean curvature vector of $F$ (see \cite[\S2.1]{Bur06}).  In
codimension $1$, a generalised $H$-surface is simply a surface of
constant mean curvature.  We now have:
\begin{proposition}[\cite{BurCal}]\label{H-surface}Let $(\Lambda,\eta)$ be a full
isothermic surface in $S^{n}$, with $n\geq3$.
Then $(\Lambda,\eta)$ is a special isothermic
surface of type $1$ in the space form $E(w)$, $w\in\Lor_{\times}$, if
and only if the lift
$F:\Sigma\to E(w)$ of $\Lambda$ is a generalised
$H$-surface.
\end{proposition}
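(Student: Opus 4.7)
The plan is to work with an explicit lift $F:\Sigma\to E(w)$ characterised by $(F,w)=-1$ and the formula $\eta=F\wedge(\D F\circ Q)$ from Proposition~\ref{Q symmetric and trace-free}. Expanding $(\D+t\eta)p(t)\equiv 0$ for $p(t)=p_{0}+p_{1}t$ with $p_{0}\in\Span{w}$ yields $\D p_{0}=0$ (automatic), $\eta p_{1}=0$ (equivalent, as in Proposition~\ref{csc}, to $p_{1}\in\Gamma({\Lambda^{(1)}}^{\perp})$), and $\D p_{1}+\eta p_{0}=0$. A direct calculation using $(F,w)=-1$ and $(\D F,w)=0$ gives $\eta w=-\D F\circ Q$, so writing $p_{0}=\alpha w$ the third equation becomes $\D p_{1}=\alpha\,\D F\circ Q$. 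Fullness of $\Lambda$ rules out $\alpha=0$ (else $p_{1}$ would be a nonzero constant section of ${\Lambda^{(1)}}^{\perp}$, placing $\Lambda$ in a subsphere), so after rescaling $p$ we may take $\alpha=1$.

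For the forward direction, a dimension count together with $F\in{\Lambda^{(1)}}^{\perp}$ and $\mathcal{N}\subset{\Lambda^{(1)}}^{\perp}$ gives the direct-sum decomposition ${\Lambda^{(1)}}^{\perp}=\mathcal{N}\oplus\Span{F}$, where $\mathcal{N}$ is the normal bundle of $F$ in $E(w)$; write $p_{1}=n_{1}+cF$ accordingly. By Proposition~\ref{csc}, the coefficients $(p_{0},p_{1})=-c$ and $(p_{1},p_{1})=\abs{n_{1}}^{2}$ of $(p(t),p(t))\in\R[t]$ are constant, with $\abs{n_{1}}^{2}>0$ since $V_{csc}^{\perp}$ is positive-definite. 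The Weingarten equation for the chain $F(\Sigma)\subset E(w)\subset\Lor$, in which the contribution of the second fundamental form of $E(w)$ in $\Lor$ vanishes because $n_{1}\perp\D F$, yields
\begin{equation*}
\D p_{1}=\nabla^{\perp}n_{1}+\D F\circ(c\,\mathrm{Id}-A^{E(w)}_{n_{1}}).
\end{equation*}
Matching this against $\D F\circ Q$ via orthogonality of $\mathcal{N}$ and $\D F(T\Sigma)$ gives $\nabla^{\perp}n_{1}=0$ and $A^{E(w)}_{n_{1}}=c\,\mathrm{Id}-Q$; taking the trace produces $(H^{E(w)}_{F},n_{1})=c$, which is constant, so $n:=n_{1}/\abs{n_{1}}$ is a parallel unit normal exhibiting $F$ as a generalised $H$-surface.

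For the converse, let $n$ be a parallel unit normal with $(H^{E(w)}_{F},n)=c_{0}\in\R$. The isothermic hypothesis forces $(A_{n})^{\circ}=\lambda_{n}Q$ for some function $\lambda_{n}$ on $\Sigma$ (in dimension two, commuting symmetric trace-free endomorphisms are proportional). The crucial step is to show $\lambda_{n}$ is \emph{constant}: combining the Codazzi identity $(\nabla_{X}A_{n})Y=(\nabla_{Y}A_{n})X$, valid because $n$ is normally parallel and $E(w)$ has constant sectional curvature, with the Codazzi property of $Q$ (which is the differential content of $\D\eta=0$), and substituting $A_{n}=c_{0}\mathrm{Id}+\lambda_{n}Q$, reduces to $X(\lambda_{n})QY=Y(\lambda_{n})QX$; invertibility of $Q$ off a discrete set then forces $\D\lambda_{n}=0$. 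Fullness of $\Lambda$ precludes $\lambda_{n}=0$, since then $A_{n}=c_{0}\mathrm{Id}$ would place $F$ on a sphere or in a hyperplane of $E(w)$. Setting $p(t)=w-\lambda_{n}^{-1}(n+c_{0}F)t$ and reversing the forward calculation verifies both $\D p_{1}=\D F\circ Q$ and $\eta p_{1}=0$.

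The main obstacle is the constancy of $\lambda_{n}$ in the converse, which genuinely uses both Codazzi equations; the exclusion of $\lambda_{n}=0$ similarly leans essentially on the fullness hypothesis. The remaining work — tracking scalings and checking non-degeneracies — is essentially bookkeeping.
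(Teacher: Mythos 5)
Your argument is correct and follows essentially the same route as the paper's (sketch) proof: decompose $p_1\in\Gamma{\Lambda^{(1)}}^{\perp}$ into its normal part and its component along $F$, use constancy of the coefficients of $(p(t),p(t))$ to make $(\mathbf{H},N)$ constant, and reverse the construction for the converse. The only real difference is that where the paper writes the converse by declaring $\eta:=F\wedge\D p_1$ and deferring its closedness to \cite{BurCal}, you keep the given $\eta=F\wedge(\D F\circ Q)$ and supply precisely the deferred detail — the two Codazzi identities forcing the proportionality factor $\lambda_n$ in $(A_n)^{\circ}=\lambda_nQ$ to be constant, and fullness excluding $\lambda_n=0$ — which also sidesteps having to invoke uniqueness of $\eta$ up to scale to match the constructed $1$-form with the given one.
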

\begin{proof}[Sketch proof]
Let $p(t)=p_{0}+p_{1}t$ be a polynomial conserved quantity of
$(\Lambda,\eta)$, with $p_{0}\in\langle w\rangle$ non-zero since
$\Lambda$ is full.  Without loss of generality, take $p_{1}$ to be of
unit length.  We can then write $p_{1}=(\mathbf{H},N)F+N$, for
$\mathbf{H}$ the mean curvature vector of $F$ and $N$ a unit parallel
section of $V_{w}^{\perp}$, thus a parallel unit normal to $F$.
Since $(p_{0},p_{1})$ is constant, it immediately follows that
$(\mathbf{H},N)$ is constant and $F$ is a generalised $H$-surface.

For the converse, if $F:\Sigma\to E(w)$ is a generalised $H$-surface
and $N$ the corresponding parallel unit normal, set $p(t):=w+p_{1}t$,
where $p_{1}:=(\mathbf{H},N)F+N$, and $\eta=F\wedge \D p_{1}$.  One
then shows that $\eta$ is closed and $(\D+t\eta)p(t)$ vanishes
identically.  For more details, see \cite{BurCal}.
\end{proof}

Note that in codimension $1$, the condition that an isothermic
surface be special of type $d=0,1$ is a differential equation on the
principal curvatures of order $d$.  This situation persists for all
$d$ \cite{San08} and, in particular, we now see that the case $d=2$
is very closely related to Darboux's notion formulated in
Definition~\ref{classical definition of special in space-forms}:

\begin{theorem}\label{theorem between pcq and new Bianchi's condition}
Let $(\Lambda,\eta)$ be an isothermic surface in $S^{3}$. Fix
$w\in\mathbb{R}^{4,1}_{\times}$ and consider the lift $F:\Sigma\to E(w)$
of $\Lambda$.  Let $(u,v)$ be conformal
curvature line coordinates corresponding to $\eta$ so that the first and
second fundamental forms of $F$ are given by
\begin{equation*}
I=e^{2\theta}(\D u^{2}+\D v^{2})\qquad
I\!I=e^{2\theta}(k_{1}\D u^{2}+k_{2}\D v^{2}).
\end{equation*}
Let $H=\frac{k_{1}+k_{2}}{2}$ be the mean curvature of $F$ and set
$L=e^{2\theta}(k_{1}-k_{2})$ and $M=-HL$.

Then $(\Lambda,\eta)$ is special isothermic of degree $2$ in $E(w)$
if and only if there are real constants $A$, $B$ and $C$ such that
\begin{equation}\label{new Bianchi condition}
\begin{cases}
H_{uu}+\theta_{u}H_{u}-\theta_{v}H_{v}-\frac{1}{2}Mk_{1}-Ak_{1}-Be^{-2\theta}+C-\frac{1}{2}L(w,w)=0\\
H_{vv}-\theta_{u}H_{u}+\theta_{v}H_{v}+\frac{1}{2}Mk_{2}+Ak_{2}-Be^{-2\theta}-C+\frac{1}{2}L(w,w)=0.
\end{cases}
\end{equation}
\end{theorem}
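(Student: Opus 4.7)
My plan is to prove the theorem by direct computation in a moving frame adapted to $E(w)$. With $F:\Sigma\to E(w)$ the lift of $\Lambda$ and $N$ a unit normal to $F$ inside $E(w)$, the tuple $\{F,w,F_u,F_v,N\}$ is a basis of $\underline{\R}^{4,1}$. After writing out the standard Gauss-Weingarten formulas in this basis, I would read off $\Delta F=-2KF+2w+2HN$ and identify $V_{csc}^\perp$ as the rank-one bundle spanned by the unit parallel section $\tilde N:=N+HF$.

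For the forward implication, Proposition~\ref{csc} forces $p_0=\lambda w$ and $p_2=\mu\tilde N$ with $\lambda,\mu\in\R$ constants. Writing $p_1=\alpha F+\beta w+\gamma F_u+\delta F_v+\epsilon N$, I would unpack $\D p_2+\eta p_1=0$ using $\eta=F\wedge(\D F\circ Q)$ with $Q$ diagonal in $(u,v)$-coordinates; matching coefficients of $F,F_u,F_v$ in $\D u$ and $\D v$ determines $\beta=\tfrac12\mu L$, $\gamma=-\mu H_u$, $\delta=\mu H_v$ (the apparent double determination of $\beta$ is automatic, since $k_1+k_2=2H$). The remaining two unknowns are then pinned down by the constancy of the coefficients of the polynomial $(p(t),p(t))$ from Proposition~\ref{csc}(3): writing $(p_0,p_1)=c_1$ and $(p_1,p_2)=c_3$ yields $\alpha=-c_1/\lambda-\tfrac12 K\mu L$ and $\epsilon=c_3/\mu+\tfrac12\mu HL$.

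I would then verify $\D p_1+\eta p_0=0$ component by component. The $w$-components reproduce the Codazzi identities $L_u=2H_ue^{2\theta}$ and $L_v=-2H_ve^{2\theta}$; the $F$- and $N$-components become tautologies after inserting the formulas for $\alpha$ and $\epsilon$; and the off-diagonal $F_v$-component of $\partial_u p_1$ and $F_u$-component of $\partial_v p_1$ reduce to the identity $H_{uv}+\theta_u H_v+\theta_v H_u=0$, itself an easy consequence of Codazzi (obtained by equating two expressions for $L_{uv}$). The diagonal $F_u$- and $F_v$-components are then exactly the two equations of \eqref{new Bianchi condition}, provided one reads off the dictionary $A=-c_3/\mu^2$, $B=\lambda/\mu$, $C=c_1/(\lambda\mu)$. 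The converse is obtained by reversing the construction: given $(A,B,C)$ satisfying \eqref{new Bianchi condition}, set $\mu=1$, $\lambda=B$, $c_1=CB$, $c_3=-A$, define $p_0,p_1,p_2$ by the formulas above, and the calculation runs backwards to show $\nabla^tp(t)=0$. The main obstacle is purely bookkeeping, but there is one conceptual pitfall: since $V_{csc}$ contains $\Delta F$ rather than $w$, the parallel section of $V_{csc}^\perp$ is $\tilde N=N+HF$ and not $N$ itself; the naive choice $p_2=\mu N$ would produce the spurious condition $\mu H=0$ and miss the theorem.
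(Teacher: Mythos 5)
Your frame, your normalisations of $p_0$ and $p_2$ (including the correct identification of $V_{csc}^{\perp}$ as $\langle N+HF\rangle$, which is indeed the one trap here), your determination of the $w$-, $F_u$- and $F_v$-components of $p_1$ from $\D p_2+\eta p_1=0$, and your identification of the two equations of \eqref{new Bianchi condition} with the diagonal tangential components of $\D p_1+\eta p_0=0$ all coincide with the paper's argument. The one place you genuinely diverge is in pinning down the $F$- and $N$-components $\alpha,\epsilon$ of $p_1$: the paper reads off, from the $F$- and $N$-components of $\D p_1+\eta p_0=0$, the exact equations $\D\alpha=\tfrac12(w,w)\D L$ and $\D\epsilon=-\tfrac12\D M$ (after Codazzi) and integrates them, the constants of integration being precisely $C$ and $A$; you instead invoke Proposition~\ref{csc}(3) and solve the algebraic relations $(p_0,p_1)=c_1$, $(p_1,p_2)=c_3$.

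For $\epsilon$ this is harmless, since $\mu\neq0$. For $\alpha$, however, your formula $\alpha=-c_1/\lambda-\tfrac12 K\mu L$ divides by $\lambda=B$, and the theorem does not exclude $B=0$: being special isothermic of type $2$ in $E(w)$ only requires $p_0\in\langle w\rangle$, which permits $p_0=0$ (exactly the case, discussed after the theorem, of M\"obius images of CMC surfaces). When $B=0$ the relation $(p_0,p_1)=c_1$ is vacuous and determines neither $\alpha$ nor $C$, so the forward implication of your argument breaks down there. The fix is to do what the paper does uniformly: the $F$-component of $\D p_1+\eta p_0=0$ does not involve $p_0$ at all and, combined with Codazzi, gives $\D\alpha=\tfrac12(w,w)\D L$, whence $\alpha=\tfrac12 L(w,w)-C$ for a constant $C$ in all cases. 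With that repair (and noting that in the converse direction the ``tautological'' $F$- and $N$-components are really Codazzi identities that must be verified, just as you verify the $w$- and cross-components), your proof is essentially the paper's proof.
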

\begin{proof}
Set $W_{1}:=e^{-\theta}F_{u}$, $W_{2}:=e^{-\theta}F_{v}$ and let $N$
be a unit normal to $F$ in $E(w)$.  From \eqref{eq:2}, we have
\begin{equation*}
\eta=e^{-2\theta}F\wedge(-F_{u}\D u+F_{v}\D
v)=e^{-\theta}F\wedge(-W_1\D u+W_2\D v).
\end{equation*}

Now suppose that $(\Lambda,\eta)$ admits a polynomial conserved quantity
$p(t)=p_{0}+p_{1}t+p_{2}t^{2}$ with $p_{0}\in\langle w\rangle$.
By Proposition~\ref{csc}, we may scale $p$ by a constant to
ensure that $p_{2}=HF+N$. Denote by $B$
the constant such that $p_{0}=Bw$ and write
\begin{equation*}
p_{1}=\alpha F+\beta W_{1}+\gamma W_{2}+\delta N+\epsilon w,
\end{equation*}
for functions $\alpha$, $\beta$, $\gamma$, $\delta$ and $\epsilon$.
With $p_2=HF+N$, $\D p_{2}+\eta p_{1}=0$ is equivalent to
\begin{equation*}
\beta=-e^{\theta}H_{u},\quad \gamma=e^{\theta}H_{v}\quad\epsilon=\frac{1}{2}L.
\end{equation*}
Now compute the components of $\D p_{1}+\eta p_{0}$ along the frame
$F,W_1,W_2,N,w$ to conclude that the vanishing of this last is
equivalent to
\begin{subequations}
\begin{gather}
\label{eq:6}
\D\alpha=e^{2\theta}H_u(w,w)\D u+e^{2\theta}H_v(w,w)\D v\\
\label{eq:7}
H_{uv}+\theta_uH_v+\theta_vH_u=0\\
\label{eq:8}
e^{2\theta}\D H=\tfrac{1}{2}(L_u\D u-L_v\D v)\\
\label{eq:9}
\D\delta=e^{2\theta}\kappa_1H_u\D u-e^{2\theta}\kappa_2 H_v\D v\\
\label{eq:10}
\begin{split}
H_{uu}+\theta_{u}H_{u}-\theta_{v}H_{v}+k_{1}\delta-\alpha-Be^{-2\theta}&=0\\
H_{vv}-\theta_{u}H_{u}+\theta_{v}H_{v}-k_{2}\delta+\alpha-Be^{-2\theta}&=0.
\end{split}
\end{gather}
\end{subequations}
Of these, \eqref{eq:7} and \eqref{eq:8} are consequences of the
Codazzi equations and, using \eqref{eq:8}, \eqref{eq:6} and
\eqref{eq:9} can be written
\begin{equation*}
\D\alpha=\half(w,w)\D L,\qquad\D\delta=-\half\D M.
\end{equation*}
We therefore introduce constants $A$ and $C$ so that
$\delta=-(\frac{1}{2}M+A)$ and $\alpha=\frac{1}{2}L(w,w)-C$ and
conclude that $p(t)$ is a polynomial conserved quantity if and only
if we have
\begin{subequations}
\label{eq:11}
\begin{align}
p_{0}&=Bw,\\
p_{1}&=(\frac{1}{2}L(w,w)-C)F-e^{\theta}H_{u}W_{1}+e^{\theta}H_{v}W_{2}-(\frac{1}{2}M+A)N+\frac{1}{2}Lw\\
p_{2}&=HF+N
\end{align}
\end{subequations}
and \eqref{eq:10} holds.  These equations, which are the components
of $\D p_{1}+\eta p_0=0$ along $W_1\D u$ and $W_2\D v$ respectively,
are precisely \eqref{new Bianchi condition}.
\end{proof}

Let us relate this to the Bianchi--Darboux condition \eqref{Bianchi's
equation generalized} that characterises the classical special
isothermic surfaces.  First observe that, with $p_0,p_1,p_2$ defined
by \eqref{eq:11}, equation \eqref{Bianchi's equation generalized}
reads
\begin{equation*}
(p_1,p_1)+2(p_0,p_2)=D-A^2.
\end{equation*}
Thus a special isothermic surface of type $2$ satisfies the
Bianchi--Darboux condition, the coefficients of $(p(t),p(t))$ being
constant.  Moreover, the converse is almost true as well: if $\Lambda$ is an
isothermic surface satisfying \eqref{Bianchi's equation generalized},
we define $p(t)$ by \eqref{eq:11} and then see that
\begin{equation*}
(p(t),p(t))=t^4-2At^3+(D-A^2)t^2+2BCt+B^2(w,w)
\end{equation*}
is constant.  Now
\begin{equation*}
\D p(t)+t\eta p(t)=\omega_1W_1\D u+\omega_2W_2\D v
\end{equation*}
with $\omega_1,\omega_2$ determined by \eqref{eq:10} and we have
$0=\D(p(t),p(t))=2(\D p(t)+t\eta p(t),p(t))$, the $t^2$-coefficient
of which is
\begin{equation*}
0=(\omega_1W_1,p_1)\D u+(\omega_2W_2,p_1)\D
v=e^{\theta}(-H_{u}\omega_1\D u+H_v\omega_2\D v).
\end{equation*}
We conclude that when $H_{u}H_{v}$ is never zero\footnote{This
condition is quite strong, excluding, for example, cylinders and
surfaces of revolution.}, a surface is special isothermic of type
$2$ in a space-form $E(w)$ if and only if the Bianchi--Darboux
condition \eqref{Bianchi's equation generalized} holds and so is
special isothermic in the classical sense.

The class of special isothermic surface of type $d$ in $S^{n}$ is M\"{o}bius
invariant: if $\Lambda$ has polynomial conserved quantity $p(t)$ and
$T\in\Ort(\Lor)$ then $T\Lambda$ has polynomial conserved quantity
$Tp(t)$.  However, the property of being special isothermic in a
fixed space-form $E(w)$ is, of course, not M\"obius invariant:
$T\Lambda$ is special isothermic in $E(Tw)$ and not, as a rule, in
$E(w)$.

There is an exception to this last rule: if $p(0)=0$ then $\Lambda$
is isothermic of type $d$ in \emph{any} space-form $E(w)$.  This is
the case if and only if $q(t)=p(t)/t$ is a polynomial conserved
quantity of degree $d-1$.  In particular, taking $n=3$ and $d=2$, we
recover the discussion of Bianchi \cite[\S\S21--22]{Bia05} to conclude
that the special isothermic surfaces of type $2$ with $B=0$ are
precisely the images under the M\"obius group of constant mean
curvature surfaces in some space-form.

\section{Transforms of special isothermic surfaces}
\label{Transformations of sis}

We now show that the class of special isothermic surfaces is very
well-behaved with respect to the transformation theory rehearsed in
Section~\ref{sec:isoth-surf-their}.

\subsection{Darboux transforms of a special isothermic
surface} \label{section of Darboux transforms of a special
isothermic surface}

Let $(\Lambda,\eta)$ be an isothermic surface with Darboux transform
$(\hat\Lambda,\hat\eta)$.  From Proposition~\ref{thm:darboux}, we
have
\begin{equation*}
\Gamma_{\Lambda}^{\hat{\Lambda}}(1-{t}/{m})\cdot(\D+t\eta)=\D+t\hat\eta.
\end{equation*}
Thus, if $p(t)$ is a polynomial conserved quantity for
$(\Lambda,\eta)$,
$\Gamma_{\Lambda}^{\hat{\Lambda}}(1-\frac{t}{m})p(t)$ is
$\D+t\hat\eta$-parallel and rational in $t$ with at worst a simple
pole at $m$.

\begin{theorem}\label{general case}Let $(\Lambda,\eta)$ be a special isothermic surface
in $S^{n}$ of type $d\in\mathbb{N}_{0}$ with respect to a polynomial
$p(t)$. Let $\hat{\Lambda}$ be a Darboux transform of
$(\Lambda,\eta)$, with spectral parameter $m$, and let $G$ be a
$(\D+m\eta)$-parallel lift of $\hat{\Lambda}$. Then:

1) $(p(m),G)$ is constant;

2) if $(p(m),G)=0$, then $(\hat{\Lambda},\hat{\eta})$ is a special
isothermic surface of type $d$, with respect  to a polynomial
$\hat{p}(t)$ for which $\hat{p}(0)=p(0)$ and
$(\hat{p}(t),\hat{p}(t))=(p(t),p(t))$.
\begin{proof}
$(p(m),G)$ constant is an immediate consequence of $p(m)$ and $G$
being parallel sections with respect to the metric connection
$\D+m\eta$. Assume now that $p(m)$ is orthogonal to $\hat{\Lambda}$.
Since $m$ is a root of the polynomial $p(t)_{\Lambda}$ \footnote{For
each section $\varphi$ of $\underline{\mathbb{R}}^{n+1,1}$, write
\begin{equation*}
\varphi=\varphi_{\Lambda}+\varphi_{\hat{\Lambda}}+\varphi_{W^{\perp}}
\end{equation*}
for the decomposition of $\varphi$ corresponding to
$\underline{\mathbb{R}}^{n+1,1}=\Lambda\oplus\hat{\Lambda}\oplus
W^{\perp}$, where $W:=\Lambda\oplus\hat{\Lambda}$.}, we can define
the polynomial $\hat{p}(t)$ by
$\hat{p}(t)=\Gamma_{\Lambda}^{\hat{\Lambda}}(1-\frac{t}{m})p(t)$,
for all $t\in\mathbb{R}\backslash\{m\}$, that is, the polynomial
$\hat{p}(t)$ such that
\begin{equation*}
p(t)_{\Lambda}=(1-\frac{t}{m})\hat{p}(t)_{\Lambda},\;\hat{p}(t)_{\hat{\Lambda}}=(1-\frac{t}{m})p(t)_{\hat{\Lambda}}
\mbox{ and }\hat{p}(t)_{W^{\perp}}=p(t)_{W^{\perp}},
\end{equation*}
which satisfies all the conditions described on point 2.

\end{proof}
\end{theorem}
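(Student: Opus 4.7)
For (1), specialising the polynomial identity $(\D+t\eta)p(t) \equiv 0$ to $t = m$ shows that $p(m)$ is parallel for the metric connection $\D+m\eta$; since $G$ is parallel for the same connection, the inner product $(p(m),G)$ is constant.

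For (2), the natural candidate is $\hat p(t) := \Gamma_{\Lambda}^{\hat\Lambda}(1 - t/m)\,p(t)$, which by Proposition~\ref{thm:darboux} is automatically $(\D + t\hat\eta)$-parallel wherever it is defined as a smooth section. Using the splitting $\triv = \Lambda \oplus \hat\Lambda \oplus W^\perp$ with $W = \Lambda \oplus \hat\Lambda$, the prescription of $\Gamma_{\Lambda}^{\hat\Lambda}$ in \eqref{gauge transformation} yields
\begin{equation*}
\hat p(t) = (1 - t/m)^{-1}\,p(t)_{\Lambda} + (1 - t/m)\,p(t)_{\hat\Lambda} + p(t)_{W^\perp},
\end{equation*}
so polynomiality reduces to divisibility of $p(t)_\Lambda$ by $(1 - t/m)$. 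The key observation is that $\hat\Lambda$ is null and pairs non-degenerately with $\Lambda$: for any vector $v$ one has $(v, G) = (v_\Lambda, G)$, so the scalar hypothesis $(p(m), G) = 0$ is exactly the vector statement $p(m)_\Lambda = 0$. This supplies the required divisibility and makes $\hat p(t)$ a polynomial.

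To bound its degree, I invoke Proposition~\ref{csc}: the top coefficient $p_d \in V_{csc}^\perp \subset \Lambda^\perp$ satisfies $(p_d, F) = 0$ for any lift $F$ of $\Lambda$, and the symmetric null-pairing argument (now with $F$ in place of $G$) forces the $\hat\Lambda$-component of $p_d$ to vanish. Hence $p(t)_{\hat\Lambda}$ has degree at most $d-1$, so $(1-t/m)\,p(t)_{\hat\Lambda}$ has degree at most $d$, while $(1-t/m)^{-1}\,p(t)_\Lambda$ has degree at most $d-1$ and $p(t)_{W^\perp}$ has degree at most $d$; together these give $\deg \hat p(t) \le d$. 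The two remaining properties fall out at once: $\Gamma_{\Lambda}^{\hat\Lambda}(1) = \mathrm{id}$ gives $\hat p(0) = p(0)$, and orthogonality of each $\Gamma_{\Lambda}^{\hat\Lambda}(c)$ gives $(\hat p(t), \hat p(t)) = (p(t), p(t))$.

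The main obstacle is the polynomiality check, and its crux is translating the scalar condition $(p(m), G) = 0$ into the vector condition $p(m)_\Lambda = 0$ via the null structure of $W$; once this is in hand, the construction is essentially algebraic bookkeeping on the three components of the decomposition.
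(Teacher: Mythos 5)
Your proof is correct and takes essentially the same route as the paper: part (1) from $p(m)$ and $G$ both being parallel for the metric connection $\D+m\eta$, and part (2) by setting $\hat p(t)=\Gamma_{\Lambda}^{\hat\Lambda}(1-\tfrac{t}{m})p(t)$ and reducing polynomiality to the equivalence, via the null pairing on $W=\Lambda\oplus\hat\Lambda$, of the scalar condition $(p(m),G)=0$ with the vector condition $p(m)_{\Lambda}=0$. You in fact make explicit a point the paper glosses over, namely the degree bound $\deg\hat p(t)\le d$ obtained from $(p_{d})_{\hat\Lambda}=0$, which follows from $p_{d}\in\Gamma V_{csc}^{\perp}\subset\Gamma\Lambda^{\perp}$.
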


In particular, from the previous Theorem, we get a sufficient
condition for a Darboux transform of a (classical) special
isothermic surface to be again a special isothermic surface of the
same class (in a certain $3$-dimensional space-form). This classical
result can be seen in \cite{Bia05,Bia05a}. This Theorem encodes also a
sufficient condition for a Darboux transform of a constant mean
curvature surface in a certain space-form to be again a constant
mean curvature surface in the same space-form, with the same mean
curvature (with the obvious changes, we have the analogous result
for generalised H-surfaces).

\medskip

The next Theorem establishes two results: extends the classical
result due to Calapso \cite{Cal15} which claims that in
$\mathbb{R}^{3}$ all Darboux transforms of a constant mean curvature
surface are special isothermic surfaces; guarantees that the
sufficient condition of Theorem \ref{general case} is also a
necessary condition, with the assumptions that codimension is $1$
and that $(\Lambda,\eta)$ is not a special isothermic surface of
type less than $d$ (see \cite{Bia05} for the classical result).

\begin{theorem}\label{the reverse} Let $(\Lambda,\eta)$ be a special isothermic surface in $S^{n}$ of type
$d\in\mathbb{N}_{0}$ with respect to a polynomial $p(t)$, and let
$\hat{\Lambda}$ be a Darboux transform of $(\Lambda,\eta)$, with
parameter $m$. Then:

1) $(\hat{\Lambda},\hat{\eta})$ is a special isothermic surface of
type $d+1$, with respect to a polynomial $\hat{p}(t)$ for which
$\hat{p}(0)=p(0)$;

2) if $n=3$, $d\in\mathbb{N}$, $(\Lambda,\eta)$ is not a special
isothermic surface of type $d-1$, and $(\hat{\Lambda},\hat{\eta})$
is a special isothermic surface of type $d$, with respect to a
polynomial $\hat{p}(t)$ such that $\hat{p}(0)=p(0)$, then
$p(m)\in\Gamma(\hat{\Lambda}^{\perp})$. \footnote{This result is
obviously true if we take $d=0$, with arbitrary codimension.}
\begin{proof}Consider the $(d+1)$-th
degree polynomial
\begin{equation*}
q(t)=\big(1-\frac{t}{m}\big)p(t),
\end{equation*}
which is a polynomial conserved quantity of $(\Lambda,\eta)$. Since
$q(m)=0$, and therefore $q(m)\in\Gamma(\hat{\Lambda}^{\perp})$, we
guarantee that $(\hat{\Lambda},\hat{\eta})$ is a special isothermic
surface of type $d+1$, with respect to a polynomial $\hat{q}(t)$
such that $\hat{q}(0)=q(0)$, and then $\hat{q}(0)=p(0)$.

Assume now that  $n=3$, $d\in\mathbb{N}$ and that $(\Lambda,\eta)$
is not a special isothermic surface of type $d-1$. Suppose that
$(\hat{\Lambda},\hat{\eta})$ is a special isothermic surface of type
$d$, with respect to a polynomial $\hat{p}(t)$, such that
$\hat{p}(0)=p(0)$. Based on the first part of this proof, it follows
that the polynomial $q(t)$ satisfying
$q(t)=\Gamma_{\hat{\Lambda}}^{\Lambda}(1-\frac{t}{m})(1-\frac{t}{m})\hat{p}(t)$
is a polynomial conserved quantity of $(\Lambda,\eta)$, with degree
$d+1$.

Take the polynomial conserved quantity $p'(t):=(1-\frac{t}{m})p(t)$
of $(\Lambda,\eta)$. Since $q(0)=p'(0)$, consider the polynomial
$s(t)$ such that $ts(t)=q(t)-p'(t)$. One can prove that $q(t)-p'(t)$
has degree $d+1$ or $q(t)-p'(t)$ is the zero polynomial.

If $q(t)-p'(t)$ has degree $d+1$, we get that $s(t)$ is a polynomial
conserved quantity of $(\Lambda,\eta)$ with degree $d$, and then,
taking into account that the codimension is $1$, we conclude that
there is a real constant $\alpha$ such that $p(t)=\alpha s(t)$.
Consequently $p(m)_{\Lambda}=0$; if $q(t)-p'(t)\equiv0$, we get
$q(m)=p'(m)=0$. Take the polynomial $a(t)$ of degree $d$ such that
$q(t)=(1-\frac{t}{m})a(t)$. In this case we also have $p(t)=\alpha
a(t)$, for some real $\alpha$, and $a(m)_{\Lambda}=0$ (because
$(1-\frac{1}{m}t)\hat{p}(t)_{\Lambda}=a(t)_{\Lambda}$).
\end{proof}
\end{theorem}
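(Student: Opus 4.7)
For part (1), the natural move is to engineer a trivial root so that Theorem~\ref{general case}(2) applies immediately. I would set
\begin{equation*}
q(t) := (1 - t/m)\, p(t),
\end{equation*}
a polynomial of degree $d+1$ which, being a scalar polynomial multiple of $p(t)$, is $\nabla^{t}$-parallel for every $t$. Since $q(m) = 0$, the pairing $(q(m), G)$ with any $(\D + m\eta)$-parallel lift $G$ of $\hat{\Lambda}$ vanishes tautologically, so Theorem~\ref{general case}(2) produces a polynomial conserved quantity $\hat{q}(t)$ of $(\hat{\Lambda}, \hat{\eta})$ of degree $d+1$ with $\hat{q}(0) = q(0) = p(0)$.

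For part (2), my plan is to compare two distinct polynomial conserved quantities of $(\Lambda, \eta)$ of degree $d+1$. The first is $p'(t) := (1 - t/m)\, p(t)$. The second I would obtain by applying the construction of part (1) in reverse: since, by Proposition~\ref{thm:darboux}, $(\Lambda, \eta)$ is a Darboux transform of $(\hat{\Lambda}, \hat{\eta})$ with the same parameter $m$, the polynomial $(1 - t/m)\hat{p}(t)$ on $\hat{\Lambda}$ transfers via Theorem~\ref{general case}(2) to the polynomial conserved quantity
\begin{equation*}
q(t) := \Gamma_{\hat{\Lambda}}^{\Lambda}(1 - t/m)\,(1 - t/m)\,\hat{p}(t)
\end{equation*}
on $\Lambda$ of degree $d+1$ with $q(0) = \hat{p}(0) = p(0)$. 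The explicit form of the gauge transformation yields $q(t)_{\Lambda} = (1 - t/m)^{2}\,\hat{p}(t)_{\Lambda}$ in the decomposition $\Lambda \oplus \hat{\Lambda} \oplus W^{\perp}$, so the $\Lambda$-component of $q(m)$ vanishes automatically. Since $q(0) = p'(0)$, the difference factors as $q(t) - p'(t) = t\, s(t)$ for some polynomial $s(t)$ of degree at most $d$.

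The decisive step will exploit the codimension-one remark following Proposition~\ref{csc}. If $s \equiv 0$ then $q = p'$, whence $(1 - t/m)^{2}\, \hat{p}(t)_{\Lambda} = (1 - t/m)\, p(t)_{\Lambda}$ and evaluation at $t = m$ gives $p(m)_{\Lambda} = 0$ directly. Otherwise, since $(\D + t\eta)(ts(t)) = t(\D + t\eta)s(t)$, the parallelism of $ts(t)$ forces $s(t)$ itself to be a polynomial conserved quantity of $\Lambda$. The hypothesis that $\Lambda$ is not special of type $d - 1$ rules out $\deg s < d$ (otherwise $t^{d-1-\deg s}\, s(t)$ would make $\Lambda$ special of type $d - 1$), so $\deg s = d$. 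Codimension one then forces $p_{d} = \alpha\, s_{d}$ for some $\alpha \in \R^{\times}$, and the conserved quantity $p(t) - \alpha\, s(t)$ has degree strictly less than $d$ and so vanishes by the same hypothesis. Thus $p = \alpha\, s$, and evaluating $t s(t) = q(t) - p'(t)$ at $t = m$ (using $p'(m) = 0$) yields $m\, s(m) = q(m)$, so $p(m) = \alpha\, q(m)/m$, whose $\Lambda$-component vanishes as above.

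The main obstacle I anticipate is the bookkeeping in part (2): tracking which polynomial lives on which surface, verifying that the transfer construction really produces a polynomial of the correct degree, and pinpointing the single use of codimension one (the proportionality of $p_{d}$ and $s_{d}$). All other steps should be formal manipulations with the pencil $\nabla^{t} = \D + t\eta$ and the gauge transformations of Proposition~\ref{thm:darboux}.
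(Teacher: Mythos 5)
Your proposal is correct and follows essentially the same route as the paper's proof: part (1) via the auxiliary conserved quantity $(1-t/m)p(t)$ and Theorem~\ref{general case}, and part (2) by comparing $p'(t)=(1-t/m)p(t)$ with the transferred quantity $q(t)=\Gamma_{\hat{\Lambda}}^{\Lambda}(1-\frac{t}{m})(1-\frac{t}{m})\hat{p}(t)$, factoring $q-p'=ts(t)$, and invoking the codimension-one rigidity of top coefficients — indeed you spell out two steps the paper leaves as ``one can prove''. The only nitpick is in the case $s\equiv 0$: direct evaluation of $(1-t/m)^{2}\hat{p}(t)_{\Lambda}=(1-t/m)p(t)_{\Lambda}$ at $t=m$ gives $0=0$, so one must first cancel a factor of $(1-t/m)$ (as the paper does by introducing $a(t)$) before evaluating to get $p(m)_{\Lambda}=0$.
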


From this Theorem we obtain in particular examples of special
isothermic surfaces of type $2$, taking a special isothermic surface
of type $1$, i.e., a generalised $H$-surface in some space-form
(which does not live in a $2$-sphere) and considering its Darboux
transforms.

\medskip

For a given special isothermic surface with respect to a polynomial
$p(t)$, there are particular Darboux transforms which are obtained
without integration. They arise from the roots of $(p(t),p(t))$.

\begin{definition}\label{definition of complementary surfaces}Let $(\Lambda,\eta)$ be a special isothermic surface in $S^{n}$, with respect
to $p(t)$. The Darboux transforms $\hat{\Lambda}$ of
$(\Lambda,\eta)$ such that $\hat{\Lambda}=\langle p(m)\rangle$, for
some $m\in\mathbb{R}^{\times}$ satisfying $(p(m),p(m))=0$, are
called the \emph{complementary surfaces} of $(\Lambda,\eta)$, with
respect to $p(t)$.
\end{definition}

We obtain therefore at most $2d$ complementary surfaces for each
special isothermic surface of type $d$.

Note that for $d=1$ with $0\neq p(0)\in\mathcal{L}$, we have exactly
one complementary surface. Fixing a $v_{0}\in E(p(0))$, it is
straightforward to check that it is the only surface in $S^{n}$ such
that its projection in $\mathbb{R}^{n}=\langle
v_{0},p(0)\rangle^{\perp}\cong E(p(0))$ is simultaneously a Darboux
transform and a Christoffel transform of the projection $f$ of
$\Lambda$ in $\mathbb{R}^{n}$. In $\mathbb{R}^{n}$ it is given by
$f+\frac{1}{H}N$, where $N$ is a unit normal section of $f$,
parallel with respect to the normal connection such that
$H=(\mathbf{H},N)$ is constant ($\mathbf{H}$ is denoting the mean
curvature vector of $f$). We get in particular that this
complementary surface of $\Lambda$ is also a generalised H-surface
in the space-form $E(p(0))$. This result is a particular case of the
following Corollary of Theorem \ref{general case}.

\begin{corollary} Let $(\Lambda,\eta)$ be a special isothermic surface in $S^{n}$
of type $d\in\mathbb{N}$ with respect to a polynomial $p(t)$. Let
$\hat{\Lambda}$ be a complementary surface of $(\Lambda,\eta)$ with
respect to $p(t)$. Then $(\hat{\Lambda},\hat{\eta})$ is a special
isothermic surface of type $d$, with respect to a polynomial
$\hat{p}(t)$, such that $\hat{p}(0)=p(0)$ and
$(\hat{p}(t),\hat{p}(t))=(p(t),p(t))$.

\begin{proof}Denoting by $m$ the spectral parameter of $\hat{\Lambda}$,
observe that the lift $G:=p(m)$ of $\hat{\Lambda}$ is a
$(\D+m\eta)$-parallel section, which is orthogonal to
$\hat{\Lambda}$.
\end{proof}
\end{corollary}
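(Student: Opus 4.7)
The strategy is to reduce directly to Theorem \ref{general case}: that theorem already asserts that whenever $\hat\Lambda$ is a Darboux transform with spectral parameter $m$ and admits a $(\D+m\eta)$-parallel lift $G$ satisfying the orthogonality condition $(p(m),G)=0$, then $(\hat\Lambda,\hat\eta)$ is special isothermic of type $d$ with a polynomial $\hat p(t)$ for which $\hat p(0)=p(0)$ and $(\hat p(t),\hat p(t))=(p(t),p(t))$. So the whole task reduces to producing such a lift $G$.

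The natural choice, dictated by Definition \ref{definition of complementary surfaces}, is $G:=p(m)$ itself, which by assumption spans $\hat\Lambda$. Two verifications are then needed. First, since $p(t)$ is a polynomial conserved quantity, $\nabla^t p(t)\equiv 0$; evaluating at $t=m$ yields $(\D+m\eta)p(m)=0$, so $G$ is $\nabla^m$-parallel, as required. Second, the defining property of a complementary surface is that $m$ is a root of the (now constant) polynomial $(p(t),p(t))$, so
\begin{equation*}
(p(m),G)=(p(m),p(m))=0,
\end{equation*}
which is precisely the orthogonality hypothesis of Theorem \ref{general case}. Invoking that theorem immediately delivers the conclusion.

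The only subtlety worth flagging is that $G=p(m)$ must be genuinely non-zero to be a lift of $\hat\Lambda$, but this is built into the definition of complementary surface, where $\hat\Lambda=\langle p(m)\rangle$ is asked to be an immersed surface. I therefore expect no real obstacle: the proof is essentially a bookkeeping exercise, as all the analytic content is packaged in Proposition \ref{csc} (which guarantees that $(p(t),p(t))$ has constant coefficients, so the condition $(p(m),p(m))=0$ is a meaningful constraint on a finite set of parameters $m$) and in Theorem \ref{general case} (which does the transfer of the polynomial conserved quantity across the Darboux transform).
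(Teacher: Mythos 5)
Your proof is correct and follows exactly the paper's own argument: take $G=p(m)$ as the $(\D+m\eta)$-parallel lift of $\hat\Lambda$, note that $(p(m),G)=(p(m),p(m))=0$ by the definition of a complementary surface, and invoke Theorem \ref{general case}. The only difference is that you spell out the verifications the paper leaves implicit.
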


Taking $d=2$, with codimension $1$, we obtain the corresponding
classical result (see \cite{Bia05,Bia05a}).

To conclude this section, we will present the conditions in which a
special isothermic surface of type $d$ admits Darboux transforms
which are special isothermic surfaces of type $d-1$.

\begin{proposition}\label{(n-1)Darbouxs}
Let $(\Lambda,\eta)$ be a special isothermic surface in $S^{n}$ of
type $d\in\mathbb{N}$ with respect to $p(t)$. Suppose that
$(\Lambda,\eta)$ is not a special isothermic surface of type $d-1$.
Then $(\Lambda,\eta)$ admits a Darboux transform with spectral
parameter $m$, which is a special isothermic surface of type $d-1$
with respect to a polynomial $\hat{p}(t)$ such that
$\hat{p}(0)=p(0)$ if and only if $m\in\mathbb{R}^{\times}$
is a repeated root of $(p(t),p(t))$. Furthermore, the Darboux
transform in this situation is the complementary surface $\langle
p(m)\rangle$ of $(\Lambda,\eta)$.
\begin{proof}
Consider a Darboux transform $\hat{\Lambda}$ of $(\Lambda,\eta)$,
with parameter $m$, and assume that $(\hat{\Lambda},\hat{\eta})$ is
a special isothermic surface of type $d-1$, with respect to a
polynomial $\hat{p}(t)$ such that $\hat{p}(0)=p(0)$. We know that
the polynomial $\xi(t)$ satisfying
$\xi(t)=\Gamma_{\hat{\Lambda}}^{\Lambda}(1-\frac{t}{m})(1-\frac{t}{m})\hat{p}(t)$
is a polynomial conserved quantity of $(\Lambda,\eta)$, with degree
$d$. Since $\xi(0)=\hat{p}(0)=p(0)$ and $p(t)$ and $\xi(t)$ are
polynomials of degree $d$, we have necessarily $p(t)=\xi(t)$ because
$d$ is the least degree of the polynomials which are conserved
quantities of $(\Lambda,\eta)$. We now get that $m$ is a repeated
root of $(p(t),p(t))$ because
\begin{equation*}
(p(t),p(t))=(\xi(t),\xi(t))=(1-\frac{1}{m}t)^{2}(\hat{p}(t),\hat{p}(t)).
\end{equation*}
Observe that $p(m)$ is not the zero section. Indeed, if it was, we
would have $p(t)=(t-m)q(t)$, for a certain polynomial $q(t)$ of
degree $d-1$. But then $q(t)$ would be a polynomial conserved
quantity of $(\Lambda,\eta)$, which is absurd. By virtue of $p(m)$
being a non-zero $(\D+m\eta)$-parallel section, we conclude that
$p(m)$ never vanishes. Therefore $p(m)\in\Gamma(\mathcal{L})$.
Consequently, since $p(m)_{\Lambda}=0$, we obtain $\langle
p(m)\rangle+\hat{\Lambda}\subset\mathcal{L}$, and then
$\hat{\Lambda}=\langle p(m)\rangle$.

Conversely, assume that there is a repeated root
$m\in\mathbb{R}^{\times}$ of $(p(t),p(t))$. We already know,
from the first part, that $p(m)$ never vanishes. Consider the null
line subbundle $\hat{\Lambda}=\langle p(m)\rangle$ of
$\underline{\mathbb{R}}^{n+1,1}$. Note that we can assume that
$\Lambda\cap\hat{\Lambda}=\{0\}$ considering a non-empty open subset
of $\Sigma$, if necessary, because if $p(m)\in\Gamma(\Lambda)$, we
would have $\eta p(m)=0$, which would imply that $\D p(m)=-m\eta
p(m)=0$. Hence we would get $\Lambda=\langle p(m)\rangle$ constant,
which is absurd. Furthermore, one can prove that $\langle
p(m)\rangle$ is an immersion (considering again a smaller set, if
necessary), having in mind Proposition \ref{Q symmetric and
trace-free}. We can therefore take the Darboux transform
$\hat{\Lambda}=\langle p(m)\rangle$ of $(\Lambda,\eta)$ (a
complementary surface of $(\Lambda,\eta)$, with respect to $p(t)$).
Considering the polynomial conserved quantity $\hat{p}(t)$ of
$(\hat{\Lambda},\hat{\eta})$ defined by
$\hat{p}(t)=\Gamma_{\Lambda}^{\hat{\Lambda}}(1-\frac{t}{m})p(t)$
which has degree $d$, and satisfies $\hat{p}(0)=p(0)$ and
$(\hat{p}(t),\hat{p}(t))=(p(t),p(t))$, we get that $m$ is a repeated
root of the polynomial
\begin{equation*}
2(\hat{p}(t)_{\Lambda},\hat{p}(t)_{\hat{\Lambda}})+(\hat{p}(t)_{(\Lambda\oplus\hat{\Lambda})^{\perp}},\hat{p}(t)_{(\Lambda\oplus\hat{\Lambda})^{\perp}}).
\end{equation*}
Since
$\hat{p}(m)_{(\Lambda\oplus\hat{\Lambda})^{\perp}}=p(m)_{(\Lambda\oplus\hat{\Lambda})^{\perp}}=0$,
consider the polynomial $a(t)$ for which
$\hat{p}(t)_{(\Lambda\oplus\hat{\Lambda})^{\perp}}=(m-t)a(t)$. It
follows that $m$ is a repeated root of the polynomial
\begin{equation*}
2(1-\frac{1}{m}t)(\hat{p}(t)_{\Lambda},p(t)_{\hat{\Lambda}})+(m-t)^{2}(a(t),a(t)),
\end{equation*}
which guarantees that $m$ is a root of
$(\hat{p}(t)_{\Lambda},p(t)_{\hat{\Lambda}})$. Since
$p(m)_{\hat{\Lambda}}$ is always different from zero, we get that
$\hat{p}(m)_{\Lambda}=0$. Therefore, as a consequence of
\begin{equation*}
\hat{p}(m)=\hat{p}(m)_{\Lambda}+\hat{p}(m)_{\hat{\Lambda}}+\hat{p}(m)_{(\Lambda\oplus\hat{\Lambda})^{\perp}}=0,
\end{equation*}
we will consider the polynomial $s(t)$ of degree $d-1$ such that
$\hat{p}(t)=(1-\frac{1}{m}t)s(t)$, which will be a polynomial
conserved quantity of $(\hat{\Lambda},\hat{\eta})$ satisfying
$s(0)=\hat{p}(0)=p(0)$.
\end{proof}
\end{proposition}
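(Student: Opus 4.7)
The plan is to leverage the gauge transformation $\Gamma_{\Lambda}^{\hat\Lambda}(1-t/m)$ from Proposition~\ref{thm:darboux}, which intertwines the two pencils of flat connections, together with the minimality hypothesis (type $d$ but not $d-1$) to obtain a rigid correspondence between polynomial conserved quantities on $\Lambda$ and on the Darboux transform $\hat\Lambda$. The repeated-root condition on $(p(t),p(t))$ should then emerge as the precise algebraic shadow of this rigidity. I will also repeatedly use the standard fact that if $p(m)=0$ for a polynomial conserved quantity $p$ of degree $d$, then $p(t)=(t-m)q(t)$ with $q(t)$ a polynomial conserved quantity of degree $d-1$; under our minimality hypothesis this forces $p(m)\neq 0$.

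For the forward implication, assume $(\hat\Lambda,\hat\eta)$ is a Darboux transform with parameter $m$ admitting a polynomial conserved quantity $\hat p(t)$ of degree $d-1$ with $\hat p(0)=p(0)$. Since $\Lambda$ is itself a Darboux transform of $\hat\Lambda$, the construction in Theorem~\ref{the reverse}(1) (with roles swapped) produces the polynomial conserved quantity
\[
\xi(t):=\Gamma_{\hat\Lambda}^{\Lambda}(1-t/m)\,(1-t/m)\,\hat p(t)
\]
of $(\Lambda,\eta)$ of degree $d$ with $\xi(0)=p(0)$. I would then argue $p=\xi$: the difference is a polynomial conserved quantity (or zero) vanishing at $0$, so factors as $t\,r(t)$, and the computation $(\D+t\eta)(t r(t))=t(\D+t\eta)r(t)$ shows $r$ is itself a polynomial conserved quantity of degree $\leq d-1$, so must vanish by minimality. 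With $p=\xi$, orthogonality of $\Gamma$ yields $(p,p)=(1-t/m)^{2}(\hat p,\hat p)$, so $m$ is a repeated root of $(p,p)$; and evaluating $\xi(m)$ in the decomposition $\Lor=\Lambda\oplus\hat\Lambda\oplus(\Lambda\oplus\hat\Lambda)^{\perp}$ gives $\xi(m)=\hat p(m)_{\hat\Lambda}\in\hat\Lambda$, which together with $p(m)\neq 0$ yields $\hat\Lambda=\langle p(m)\rangle$.

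For the converse, starting from a repeated root $m\in\R^{\times}$ of $(p(t),p(t))$, I would set $\hat\Lambda:=\langle p(m)\rangle$. Here $p(m)\neq 0$ by the factoring remark above and $(p(m),p(m))=0$ since $m$ is a root, so $p(m)$ is a null section. Standard checks using Proposition~\ref{Q symmetric and trace-free} (that $\Lambda\cap\hat\Lambda=\{0\}$ on a dense open set and $\hat\Lambda$ is immersed) identify $\hat\Lambda$ as a Darboux transform with parameter $m$, and Theorem~\ref{general case}(2) then produces a polynomial conserved quantity $\hat p(t):=\Gamma_{\Lambda}^{\hat\Lambda}(1-t/m)\,p(t)$ of $(\hat\Lambda,\hat\eta)$ of degree $d$ with $\hat p(0)=p(0)$ and $(\hat p,\hat p)=(p,p)$.

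The crux, which I expect to be the main obstacle, is showing $\hat p(m)=0$ so that $\hat p$ drops in degree. Two of the three components of $\hat p(m)$ vanish for free: $\hat p(m)_{\hat\Lambda}=(1-m/m)\,p(m)_{\hat\Lambda}=0$ from the component transformation rule, and $\hat p(m)_{W^{\perp}}=p(m)_{W^{\perp}}=0$ because $p(m)\in\hat\Lambda$, where $W=\Lambda\oplus\hat\Lambda$. For the $\Lambda$-component, I would write $\hat p(t)_{W^{\perp}}=(m-t)\,a(t)$ and $\hat p(t)_{\hat\Lambda}=(1-t/m)\,p(t)_{\hat\Lambda}$, so that $(\hat p,\hat p)=(p,p)$ expands as
\[
(p(t),p(t))=2(1-t/m)\bigl(\hat p(t)_{\Lambda},p(t)_{\hat\Lambda}\bigr)+(m-t)^{2}\bigl(a(t),a(t)\bigr).
\]
The double-root hypothesis forces $(1-t/m)$ to divide the first summand, so $(\hat p(m)_{\Lambda},p(m)_{\hat\Lambda})=0$; since $p(m)_{\hat\Lambda}=p(m)$ is a non-zero section of $\hat\Lambda$ and $\Lambda$ pairs non-degenerately with $\hat\Lambda$, this yields $\hat p(m)_{\Lambda}=0$. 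Writing $\hat p(t)=(1-t/m)\,s(t)$ and dividing $(\D+t\hat\eta)\hat p(t)=0$ by $(1-t/m)$ then exhibits $s$ as a polynomial conserved quantity of $(\hat\Lambda,\hat\eta)$ of degree $d-1$ with $s(0)=p(0)$, completing the proof.
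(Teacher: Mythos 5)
Your proposal is correct and follows essentially the same route as the paper: in the forward direction you gauge $\hat p$ back to a degree-$d$ conserved quantity $\xi$ of $(\Lambda,\eta)$, use minimality to force $\xi=p$, and read off both the repeated root and $\hat\Lambda=\langle p(m)\rangle$; in the converse you set $\hat\Lambda=\langle p(m)\rangle$, gauge $p$ to $\hat p$, and use exactly the paper's decomposition $(\hat p,\hat p)=2(\hat p_\Lambda,\hat p_{\hat\Lambda})+(\hat p_{W^\perp}, \hat p_{W^\perp})$ with the factorisation $\hat p(t)_{W^\perp}=(m-t)a(t)$ to conclude $\hat p(m)=0$. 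Your explicit factoring of $p-\xi$ as $t\,r(t)$ is a slightly fuller justification of the step the paper dispatches with ``$d$ is the least degree,'' but the argument is the same.
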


With this last Proposition, we proved that given a special
isothermic surface $(\Lambda,\eta)$ of type $d\in\mathbb{N}$, with
respect to a polynomial $p(t)$, which is not a special isothermic
surface of type $d-1$, the Darboux transforms
$(\hat{\Lambda},\hat{\eta})$ of $(\Lambda,\eta)$ which are special
isothermic surfaces of type $d-1$ with respect to a polynomial with
constant term $p(0)$ are exactly the complementary surfaces of
$(\Lambda,\eta)$ with respect to $p(t)$ such that the spectral
parameters are repeated (non-zero) roots of $(p(t),p(t))$. In
particular, we get that there are at most $d$ Darboux transforms
$(\hat{\Lambda},\hat{\eta})$ of $(\Lambda,\eta)$ under these
conditions. The case in which $0$ is a repeated root of
$(p(t),p(t))$ will be discussed in section \ref{Christoffel section}
(see Propositions \ref{0 repeated root} and \ref{0repeated root2}).

\subsubsection{Bianchi permutability theorem of special isothermic
surfaces}

Let $(\Lambda,\eta)$ be an isothermic surface in $S^{n}$ and let
$(\Lambda_{1},\eta_{1})$ and $(\Lambda_{2},\eta_{2})$ be two Darboux
transforms of $(\Lambda,\eta)$, associated to different spectral
parameters $m_{1}$ and $m_{2}$, respectively. Assume that
$\Lambda_{1}\cap\Lambda_{2}=\{0\}$. The Bianchi permutability
theorem of isothermic surfaces claims that there is an isothermic
surface in $S^{n}$ which is simultaneously a Darboux transform of
$(\Lambda_{1},\eta_{1})$, with spectral parameter $m_{2}$, and a
Darboux transform of $(\Lambda_{2},\eta_{2})$, with parameter
$m_{1}$, namely
\begin{equation*}
\hat{\Lambda}:=\Gamma_{\Lambda}^{\Lambda_{1}}(1-\frac{m_{2}}{m_{1}})(\Lambda_{2})=\Gamma_{\Lambda}^{\Lambda_{2}}(1-\frac{m_{1}}{m_{2}})(\Lambda_{1})=\Gamma_{\Lambda_{2}}^{\Lambda_{1}}(\frac{m_{2}}{m_{1}})(\Lambda).
\end{equation*}
$(\hat{\Lambda},\hat{\eta})$ is isothermic when we consider the
$1$-form
\begin{equation*}
\hat{\eta}:=\Gamma_{\Lambda_{2}}^{\Lambda_{1}}(\frac{m_{2}}{m_{1}})\circ\eta\circ
\Gamma_{\Lambda_{2}}^{\Lambda_{1}}(\frac{m_{2}}{m_{1}})^{-1}.
\end{equation*}
In fact this $\hat{\eta}$ is the closed $1$-form associated to
$\hat{\Lambda}$ as a Darboux transform of $(\Lambda_{1},\eta_{1})$
with parameter $m_{2}$, and associated to $\hat{\Lambda}$ as a
Darboux transform of $(\Lambda_{2},\eta_{2})$ with parameter
$m_{1}$.

With the above notations, we have the following Theorem.

\begin{theorem}\label{Bianchi permutability theorem}If $(\Lambda,\eta)$ is a special isothermic surface in $S^{n}$
of type $d\in\mathbb{N}_{0}$, with respect to a polynomial $p(t)$
for which $p(m_{1})\in\Gamma(\Lambda_{1}^{\perp})$ and
$p(m_{2})\in\Gamma(\Lambda_{2}^{\perp})$, then
$(\hat{\Lambda},\hat{\eta})$ is a special isothermic surface of type
$d$, with respect to a polynomial $\hat{p}(t)$ for which
$\hat{p}(0)=p(0)$ and $(\hat{p}(t),\hat{p}(t))=(p(t),p(t))$.
\begin{proof}
Assume that $(\Lambda,\eta)$ is a special isothermic surface of type
$d\in\mathbb{N}_{0}$, with respect to a polynomial $p(t)$ for which
$p(m_{1})\in\Gamma(\Lambda_{1}^{\perp})$ and
$p(m_{2})\in\Gamma(\Lambda_{2}^{\perp})$. Theorem \ref{general case}
guarantees that the polynomial $p_{1}(t)$ such that
$p_{1}(t)=\Gamma_{\Lambda}^{\Lambda_{1}}(1-\frac{t}{m_{1}})p(t)$,
for all $t\in\mathbb{R}\backslash\{m_{1}\}$, is a polynomial
conserved quantity of $(\Lambda_{1},\eta_{1})$, with degree $d$,
satisfying $p_{1}(0)=p(0)$ and $(p_{1}(t),p_{1}(t))=(p(t),p(t))$. It
follows from $p(m_{2})\in\Gamma(\Lambda_{2}^{\perp})$ that
\begin{equation*}
p_{1}(m_{2})=\Gamma_{\Lambda}^{\Lambda_{1}}(1-\frac{m_{2}}{m_{1}})p(m_{2})\in\big(\Gamma_{\Lambda}^{\Lambda_{1}}(1-\frac{m_{2}}{m_{1}})(\Lambda_{2})\big)^{\perp}=\hat{\Lambda}^{\perp},
\end{equation*}
which implies, using again Theorem \ref{general case}, that
$(\hat{\Lambda},\hat{\eta})$ is a special isothermic surface of type
$d$, with respect to a polynomial $\hat{p}(t)$, namely the one
satisfying
$\hat{p}(t)=\Gamma_{\Lambda_{1}}^{\hat{\Lambda}}(1-\frac{t}{m_{2}})p_{1}(t)$,
such that $\hat{p}(0)=p_{1}(0)$ and
$(\hat{p}(t),\hat{p}(t))=(p_{1}(t),p_{1}(t))$. Consequently
$\hat{p}(0)=p(0)$ and $(\hat{p}(t),\hat{p}(t))=(p(t),p(t))$.
\end{proof}
\end{theorem}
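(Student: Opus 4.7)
The plan is to iterate Theorem \ref{general case} twice, transferring the polynomial conserved quantity first along the Darboux transform $\Lambda \to \Lambda_1$ and then along $\Lambda_1 \to \hat{\Lambda}$. This mirrors the two-step construction of $\hat{\Lambda}$ itself via the Bianchi permutability theorem for isothermic surfaces, and lets us bootstrap the statement from what is already known.

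First I would apply Theorem \ref{general case} directly to $(\Lambda, \eta)$ with its Darboux transform $(\Lambda_1, \eta_1)$ of parameter $m_1$. The hypothesis $p(m_1) \in \Gamma(\Lambda_1^\perp)$ is exactly the orthogonality condition that theorem requires, and so it produces a polynomial conserved quantity
\begin{equation*}
p_1(t) := \Gamma_{\Lambda}^{\Lambda_1}\bigl(1 - \tfrac{t}{m_1}\bigr)\, p(t)
\end{equation*}
of $(\Lambda_1, \eta_1)$, of degree $d$, satisfying $p_1(0) = p(0)$ and $(p_1(t), p_1(t)) = (p(t), p(t))$.

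Next I would verify that $p_1(m_2) \in \Gamma(\hat{\Lambda}^\perp)$, which is what is needed to apply Theorem \ref{general case} a second time. Evaluating at $t = m_2$ gives
\begin{equation*}
p_1(m_2) = \Gamma_{\Lambda}^{\Lambda_1}\bigl(1 - \tfrac{m_2}{m_1}\bigr)\, p(m_2).
\end{equation*}
By hypothesis $p(m_2) \in \Lambda_2^\perp$, and by definition $\hat{\Lambda} = \Gamma_{\Lambda}^{\Lambda_1}(1 - m_2/m_1)(\Lambda_2)$. Since the gauge transformation $\Gamma_{\Lambda}^{\Lambda_1}(c)$ is orthogonal, it carries $\Lambda_2^\perp$ onto $\hat{\Lambda}^\perp$, and the required orthogonality follows.

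Finally, I would apply Theorem \ref{general case} once more, now with base surface $(\Lambda_1, \eta_1)$, Darboux transform $\hat{\Lambda}$ of parameter $m_2$, and polynomial $p_1(t)$. The output,
\begin{equation*}
\hat{p}(t) := \Gamma_{\Lambda_1}^{\hat{\Lambda}}\bigl(1 - \tfrac{t}{m_2}\bigr)\, p_1(t),
\end{equation*}
is a polynomial conserved quantity of $(\hat{\Lambda}, \hat{\eta})$ of degree $d$ with $\hat{p}(0) = p_1(0) = p(0)$ and $(\hat{p}, \hat{p}) = (p_1, p_1) = (p, p)$. The only non-routine point is the orthogonality check in the middle step, and even that is essentially built into the permutability construction of $\hat{\Lambda}$, since the gauge transformation used to define $\hat{\Lambda}$ is the same one used to transport $p$ to $p_1$.
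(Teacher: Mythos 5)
Your proposal is correct and follows essentially the same route as the paper: apply Theorem \ref{general case} to transport $p(t)$ to $p_1(t)$ along the first Darboux transform, check $p_1(m_2)\in\Gamma(\hat{\Lambda}^{\perp})$ via the orthogonality of the gauge transformation and the identity $\hat{\Lambda}=\Gamma_{\Lambda}^{\Lambda_1}(1-m_2/m_1)(\Lambda_2)$, then apply Theorem \ref{general case} again. No discrepancies to report.
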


As a consequence of Theorems \ref{Bianchi permutability theorem} and
\ref{the reverse}, we obtain the following Corollary. One can find
the corresponding classical result in \cite{Bia05}.

\begin{corollary}If $(\Lambda,\eta)$ is a special isothermic surface in $S^{3}$
of type $d\in\mathbb{N}$, with respect to a polynomial $p(t)$, but
it is not a special isothermic surface of type $d-1$, and if
$(\Lambda_{1},\eta_{1})$ and $(\Lambda_{2},\eta_{2})$ are special
isothermic surfaces of type $d$, with respect to polynomials
$p_{1}(t)$ and $p_{2}(t)$, respectively, such that
$p_{1}(0)=p_{2}(0)=p(0)$, then $(\hat{\Lambda},\hat{\eta})$ is a
special isothermic surface of type $d$, with respect to a polynomial
$\hat{p}(t)$ for which $\hat{p}(0)=p(0)$ and
$(\hat{p}(t),\hat{p}(t))=(p(t),p(t))$.
\end{corollary}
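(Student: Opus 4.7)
The plan is to reduce the corollary to a direct application of Theorem~\ref{Bianchi permutability theorem}, whose only non-trivial hypotheses are the orthogonality conditions $p(m_{1})\in\Gamma(\Lambda_{1}^{\perp})$ and $p(m_{2})\in\Gamma(\Lambda_{2}^{\perp})$, where $m_{1},m_{2}$ are the spectral parameters of the Darboux transforms $(\Lambda_{1},\eta_{1})$ and $(\Lambda_{2},\eta_{2})$ of $(\Lambda,\eta)$. These orthogonality conditions are exactly what part~2 of Theorem~\ref{the reverse} produces in the $n=3$ setting under the non-degeneracy hypothesis that $(\Lambda,\eta)$ is not special of type $d-1$.

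Concretely, for each $i\in\{1,2\}$ I verify that the hypotheses of Theorem~\ref{the reverse}(2) are satisfied by the pair $(\Lambda,\eta)$, $(\Lambda_{i},\eta_{i})$: we have $n=3$, $d\in\mathbb{N}$, the surface $(\Lambda,\eta)$ is special of type $d$ with respect to $p(t)$ but not of type $d-1$, and $(\Lambda_{i},\eta_{i})$ is by assumption a Darboux transform of $(\Lambda,\eta)$ which is itself special of type $d$ with respect to $p_{i}(t)$ satisfying $p_{i}(0)=p(0)$. Applying Theorem~\ref{the reverse}(2) gives $p(m_{i})\in\Gamma(\Lambda_{i}^{\perp})$.

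With both orthogonality conditions established, Theorem~\ref{Bianchi permutability theorem} applies immediately and yields that the Bianchi permutability surface $(\hat{\Lambda},\hat{\eta})$ is special isothermic of type $d$, with respect to some polynomial $\hat{p}(t)$ satisfying $\hat{p}(0)=p(0)$ and $(\hat{p}(t),\hat{p}(t))=(p(t),p(t))$, which is precisely the claim.

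Since the corollary is essentially a bookkeeping synthesis of the two preceding theorems, I do not anticipate any real obstacle; the only point worth flagging is the implicit use of the standing assumption $\Lambda_{1}\cap\Lambda_{2}=\{0\}$ (from the setup preceding Theorem~\ref{Bianchi permutability theorem}) to ensure that $\hat{\Lambda}$ is defined, together with the observation that the hypothesis that $(\Lambda,\eta)$ is not of type $d-1$ is needed \emph{only} to invoke Theorem~\ref{the reverse}(2) and plays no further role in the final step.
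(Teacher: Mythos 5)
Your proof is correct and follows exactly the route the paper intends: the paper states the corollary as "a consequence of Theorems \ref{Bianchi permutability theorem} and \ref{the reverse}" without further detail, and your argument supplies precisely the missing bookkeeping, namely using Theorem \ref{the reverse}(2) on each pair $(\Lambda,\eta)$, $(\Lambda_{i},\eta_{i})$ to obtain $p(m_{i})\in\Gamma(\Lambda_{i}^{\perp})$ and then feeding these into Theorem \ref{Bianchi permutability theorem}. Your remark about the standing assumption $\Lambda_{1}\cap\Lambda_{2}=\{0\}$ and the role of the non-degeneracy hypothesis is also accurate.
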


\subsection{Christoffel transforms of a special isothermic
surface}\label{Christoffel section}

Let $(\Lambda,\eta)$ be an isothermic surface in $S^{n}$. Fix a pair
$(v_{\infty},v_{0})$ where $v_{\infty},v_{0}\in\mathcal{L}$ and
$(v_{0},v_{\infty})=-1$.

\begin{theorem}\label{theorem-Christoffel}If $(\Lambda,\eta)$ is a special isothermic surface in $S^{n}$
of type $d\in\mathbb{N}_{0}$ with respect to a polynomial $p(t)$ for
which the constant term $p(0)\in\langle v_{\infty}\rangle^{\perp}$,
then $(\Lambda^{c},\eta^{c})$ is a special isothermic surface of
type $d$, with respect to a polynomial $q(t)$ such that
$(q(t),q(t))=(p(t),p(t))$ and $q(0)\in\langle v_{0}\rangle^{\perp}$.
Moreover, if $p(0)\in\langle v_{\infty}\rangle$, then
$q(0)\in\langle v_{0}\rangle$.
\begin{proof}
Assuming that a polynomial
$p(t)\in\Gamma(\underline{\mathbb{R}}^{n+1,1})[t]$ satisfies
$p(0)\in\langle v_{\infty}\rangle^{\perp}$, we can define the
polynomial $q(t)$ by $q(t)=\Gamma^{c}(t)p(t)$, for all $t\neq0$ (recall
section \ref{Christoffel transforms}). If $p(t)$ is
$(\D+t\eta)$-parallel, we automatically obtain that $q(t)$ is
$(\D+t\eta^{c})$-parallel. Furthermore,
\begin{equation*}
q(0)=-(p_{1},v_{\infty})v_{0}+p_{0}+(p_{0},v_{0})v_{\infty}+(p_{0},f^{c})v_{0},
\end{equation*}
where $p(t)=\sum_{k=0}^d p_{k}t^{k}$, and
$f^{c}:\Sigma\to\mathbb{R}^{n}=\langle
v_{0},v_{\infty}\rangle^{\perp}$ is an immersion such that
$\eta:=\Ad(\exp(f\wedge v_{\infty}))\,(\D f^{c}\wedge v_{0})$.
\end{proof}
\end{theorem}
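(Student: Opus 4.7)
The plan is to construct $q(t)$ as the gauge transform $q(t):=\Gamma^c(t)p(t)$. By the proposition on Christoffel transforms recalled in Section~\ref{Christoffel transforms}, $\Gamma^c(t)$ intertwines $\D+t\eta$ with $\D+t\eta^c$, so $q(t)$ is automatically $(\D+t\eta^c)$-parallel wherever it is defined; moreover, each of the three factors $\exp(f^c\wedge v_0)$, $\Gamma_{\langle v_0\rangle}^{\langle v_\infty\rangle}(t)$, $\exp(-f\wedge v_\infty)$ is orthogonal, so $(q(t),q(t))=(p(t),p(t))$ for free. The only substantive issue is that the middle factor has a simple pole at $t=0$ (it acts by $t^{-1}$ on $\langle v_0\rangle$), so a priori $q(t)$ is rational with a possible pole at $0$.

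This is where the hypothesis $p(0)\in\langle v_\infty\rangle^\perp$ enters. Using the decomposition $\R^{n+1,1}=\langle v_0\rangle\oplus\R^n\oplus\langle v_\infty\rangle$ and the fact that $(f\wedge v_\infty)$ is nilpotent of order at most $3$ (since $v_\infty$ is null and $f\in\R^n$), the expansion $\exp(-f\wedge v_\infty)=I-(f\wedge v_\infty)+\tfrac{1}{2}(f\wedge v_\infty)^2$ shows by direct computation that the $\langle v_0\rangle$-component of $\exp(-f\wedge v_\infty)p(t)$ equals $-(p(t),v_\infty)v_0$. The condition $p(0)\in\langle v_\infty\rangle^\perp$ says precisely $(p(0),v_\infty)=0$, so this coefficient vanishes at $t=0$; division by $t$ still yields a polynomial, and hence $q(t)$ is genuinely polynomial of degree $d$.

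To finish, I would track the residual values at $t=0$ by evaluating each factor in turn. The pole cancellation contributes $-(p_1,v_\infty)v_0$ in the $\langle v_0\rangle$-slot, the $\langle v_\infty\rangle$-slot is killed by the explicit factor of $t$ coming from $\Gamma_{\langle v_0\rangle}^{\langle v_\infty\rangle}(t)$ acting on $v_\infty$, and in the middle $\R^n$-slot one obtains $p_0+(p_0,v_0)v_\infty$ after peeling off the $v_\infty$-component of $p_0$. Applying $\exp(f^c\wedge v_0)$ then adds an $(f^c,\cdot)\,v_0$ contribution, producing the formula for $q(0)$ displayed in the excerpt. Pairing $q(0)$ with $v_0$ and using $(v_0,v_0)=0$ and $(v_0,v_\infty)=-1$ gives $(q(0),v_0)=(p_0,v_0)-(p_0,v_0)=0$, so $q(0)\in\langle v_0\rangle^\perp$. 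For the moreover clause, $p_0\in\langle v_\infty\rangle$ means $p_0=\beta_0 v_\infty$, which forces $(p_0,f^c)=0$ and $p_0+(p_0,v_0)v_\infty=0$, leaving $q(0)\in\langle v_0\rangle$ as required. The only real obstacle is the bookkeeping of the nilpotent actions against the singular dilation in order to confirm that the pole at $t=0$ is removable; beyond this, the parallel property and the norm identity are immediate from the structure of $\Gamma^c(t)$, and no further geometric input is required.
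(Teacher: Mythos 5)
Your proposal is correct and follows essentially the same route as the paper: define $q(t)=\Gamma^{c}(t)p(t)$, get parallelism from the gauge relation and the norm identity from orthogonality of $\Gamma^{c}(t)$, and use the hypothesis $(p(0),v_{\infty})=0$ to cancel the pole of the $\langle v_{0}\rangle$-component at $t=0$, arriving at the displayed formula for $q(0)$. Your write-up in fact supplies more of the bookkeeping than the paper's own (very terse) proof.
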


The last part of this Theorem gives in particular the classical
result, due to Bianchi \cite{Bia05}, which states that given a
special isothermic surface $f$ in $\mathbb{R}^{3}$ of class
$(A,B,C,D)$, with conformal curvature line coordinates $(u,v)$, the
Christoffel transforms $f^{c}$ of $f$ such that
\begin{equation*}
f^{c}_{u}=e^{-2\theta}f_{u}\mbox{ and }f^{c}_{v}=-e^{-2\theta}f_{v},
\end{equation*}
where the first fundamental form of $f$ is $I=e^{2\theta}(\D
u^{2}+\D v^{2})$, are special isothermic surfaces of class
$(A,C,B,D)$. Indeed, fix
$v_{\infty}\in\mathcal{L}\subseteq\mathbb{R}^{4,1}$ and $v_{0}\in
E(v_{\infty})$, and identify $\mathbb{R}^{3}$ with $\langle
v_{0},v_{\infty}\rangle^{\perp}$. Consider the surfaces
$F=\exp(f\wedge
v_{\infty})v_{0}$ in $E(v_{\infty})$ and
$F^{c}=\exp(f^{c}\wedge
v_{0})v_{\infty}$ in $E(v_{0})$. Take
now the isothermic surfaces $(\Lambda:=\langle F\rangle,\eta)$ and $(\Lambda^{c}:=\langle
F^{c}\rangle,\eta^{c})$ where $\eta=\Ad(\exp(f\wedge v_{\infty}))\;(\D f^{c}\wedge
v_{0})$ and $\eta^{c}=\Ad(\exp(f^{c}\wedge v_{0}))\;(\D f\wedge
v_{\infty})$.

Consider the unit vector fields $W_{1}=e^{-\theta}F_{u}$ and
$W_{2}=e^{-\theta}F_{v}$, and let $N$ be a unit normal to $F$ in $E(v_{\infty})$. Take the
unit vector fields $W_{1}^{c}:=e ^{\theta}F^{c}_{u}=\Gamma^{c}(t)W_{1}$, $W_{2}^{c}:=e^{\theta}F^{c}_{v}=-\Gamma^{c}(t)W_{2}$ and
$N^{c}=-\Gamma^{c}(t)N$ of $F^{c}$.

By virtue of $F$ being a special isothermic surface of class
$(A,B,C,D)$, we learn that $p(t)=p_{0}+p_{1}t+p_{2}t^{2}$ given by
\begin{equation*}
\begin{split}
&p_{0}=Bv_{\infty},\\
&p_{1}=-CF-e^{\theta}H_{u}W_{1}+e^{\theta}H_{v}W_{2}-(\frac{1}{2}M+A)N+\frac{1}{2}Lv_{\infty}\;\mbox{
and }\\
&p_{2}=HF+N,
\end{split}
\end{equation*}
is a polynomial conserved quantity of $(\Lambda,\eta)$ (in fact
assuming that $H_{u}H_{v}$ never vanishes). Considering now the
polynomial $q(t)=q_{0}+q_{1}t+q_{2}t^{2}$ for which $q(t)=\Gamma^{c}(t)p(t)$, for all $t\neq0$, we get
\begin{equation*}
\begin{split}
&q_{0}=-Cv_{0},\\
&q_{1}=BF^{c}-e^{\theta}H_{u}W_{1}^{c}-e^{\theta}H_{v}W_{2}^{c}+(\frac{1}{2}M+A)N^{c}+Hv_{0}\;\mbox{
and }\\
&q_{2}=\frac{1}{2}LF^{c}-N^{c}.
\end{split}
\end{equation*}
The result follows noticing that $H^{c}=-\frac{1}{2}L$, $L^{c}=-2H$
and $M^{c}=M$ (with the obvious notations).

\begin{proposition}\label{0 repeated root}If $(\Lambda,\eta)$ is a special isothermic surface in $S^{n}$
of type $d\in\mathbb{N}$, with respect to a polynomial $p(t)$ for
which $0\neq p(0)\in\langle v_{\infty}\rangle$ and $0$ is a repeated
root of $(p(t),p(t))$, then $(\Lambda^{c},\eta^{c})$ is a special
isothermic surface of type $d-1$.
\begin{proof}
Assume the conditions of the hypothesis with $p(t)=\sum_{k=0}^d
p_{k}t^{k}$. Take the $d$-degree polynomial $q(t)$ defined by
$\Gamma^{c}(t)p(t)$, for all $t\neq0$, which is a conserved quantity
of $(\Lambda^{c},\eta^{c})$. Since $0$ is a repeated root of
$(p(t),p(t))$, we get $(p_{0},p_{1})=0$, which implies that
$(p_{1},v_{\infty})=0$, and then
$q(0)=-(p_{1},v_{\infty})v_{0}=0$. Considering the polynomial
$s(t)$ for which $q(t)=ts(t)$, we obtain a polynomial conserved
quantity of $(\Lambda^{c},\eta^{c})$, with degree equal to $d-1$.
\end{proof}
\end{proposition}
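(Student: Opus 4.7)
The plan is to bootstrap from Theorem~\ref{theorem-Christoffel}: since $p(0)\in\Span{v_{\infty}}\subset\Span{v_\infty}^{\perp}$, that theorem already produces a polynomial conserved quantity $q(t)=\Gamma^{c}(t)p(t)$ of $(\Lambda^{c},\eta^{c})$ of degree $d$, so the work reduces to showing $q(0)=0$. Once this is done, the factorisation $q(t)=ts(t)$ delivers a polynomial $s(t)$ of degree $d-1$, and dividing the parallelism equation $(\D+t\eta^{c})q(t)=0$ through by $t$ (on the open set $t\neq0$ and then extending to $t=0$ by matching coefficients) shows $s(t)$ is itself a polynomial conserved quantity. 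Non-triviality of $s$ follows from $\deg q=d$, so the leading coefficient of $s$ is exactly that of $q$.

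To show $q(0)=0$, I would start from the explicit formula extracted in the proof of Theorem~\ref{theorem-Christoffel},
\begin{equation*}
q(0)=-(p_{1},v_{\infty})v_{0}+p_{0}+(p_{0},v_{0})v_{\infty}+(p_{0},f^{c})v_{0},
\end{equation*}
and use the hypothesis $p_{0}=Bv_{\infty}$ with $B\neq0$. Since $(v_{\infty},v_{0})=-1$ and $f^{c}\in\Span{v_{0},v_{\infty}}^{\perp}$, the last three terms collapse to $Bv_{\infty}-Bv_{\infty}+0=0$, leaving $q(0)=-(p_{1},v_{\infty})v_{0}$.

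It remains to exploit the repeated root assumption. The coefficient of $t$ in $(p(t),p(t))$ is $2(p_{0},p_{1})$, and this vanishes because $0$ is a double root. Combined with $p_{0}=Bv_{\infty}$ and $B\neq0$, this forces $(v_{\infty},p_{1})=0$, whence $q(0)=0$ as required. The only mild obstacle is the bookkeeping in the $q(0)$ formula: one has to keep track of how $\Gamma^{c}(t)$ mixes the $\langle v_{\infty}\rangle$, $\langle v_{0}\rangle$ and $\Span{v_{0},v_{\infty}}^{\perp}$ components at $t=0$, but the hypothesis $p_{0}\in\langle v_{\infty}\rangle$ is precisely what makes all three of the potentially obstructing terms cancel in concert, leaving the single scalar $(v_{\infty},p_{1})$ controlled exactly by the double-root condition.
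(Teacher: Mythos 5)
Your proof is correct and follows essentially the same route as the paper's: form $q(t)=\Gamma^{c}(t)p(t)$, observe that the double-root condition gives $(p_0,p_1)=0$ and hence $(p_1,v_\infty)=0$ via $p_0=Bv_\infty$ with $B\neq0$, conclude $q(0)=0$, and factor out $t$. You merely make explicit the cancellation in the $q(0)$ formula that the paper leaves implicit.
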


\begin{proposition}\label{0repeated root2}
If $(\Lambda,\eta)$ is a special isothermic surface in $S^{3}$ of
type $d\in\mathbb{N}$ with respect to a polynomial $p(t)$, but it is
not a special isothermic surface of type $d-1$, and if
$(\Lambda^{c},\eta^{c})$ is a special isothermic surface of type
$d-1$, then $0$ is a repeated root of $(p(t),p(t))$ and
$p(0)\in\langle v_{\infty}\rangle$.
\begin{proof}
Assume the conditions of the hypothesis. Denote by $s(t)$ a
polynomial of degree $d-1$ which is a polynomial conserved quantity
of $(\Lambda^{c},\eta^{c})$. Then the polynomial $g(t)$ defined by
$g(t)=\Gamma^{c}(t)^{-1}(ts(t))$, for all $t\neq0$, is a polynomial
conserved quantity of $(\Lambda,\eta)$, with degree $d$. Using the
fact that $(\Lambda,\eta)$ is not a special isothermic surface of
type $d-1$ and the fact that the codimension is $1$, we obtain
$p(t)=\alpha g(t)$, for some non-zero constant $\alpha$. Hence we
conclude that
$(p(t),p(t))=\alpha^{2}(g(t),g(t))=\alpha^{2}t^{2}(s(t),s(t))$,
which guarantees that $0$ is a repeated root of $(p(t),p(t))$.
Finally, we get that $p(0)\in\langle v_{\infty}\rangle$, because the
constant term of $ts(t)$ belongs to $\langle v_{0}\rangle$ (it is
equal to zero).
\end{proof}
\end{proposition}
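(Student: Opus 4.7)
The plan is to run the Christoffel construction in reverse. Since $(\Lambda^c,\eta^c)$ is special of type $d-1$, let $s(t)$ be a polynomial conserved quantity of degree $d-1$. I would first observe that $ts(t)$ is itself a polynomial conserved quantity of $(\Lambda^c,\eta^c)$, now of degree $d$: indeed $(\D+t\eta^c)(ts(t))=t(\D+t\eta^c)s(t)=0$. Its constant term is $0$, which is trivially contained in $\langle v_0\rangle$. Applying Theorem \ref{theorem-Christoffel} to the isothermic surface $(\Lambda^c,\eta^c)$ with the roles of $v_0$ and $v_\infty$ swapped (so that the Christoffel transform is, by the involutive nature of the operation, $(\Lambda,\eta)$), one obtains a polynomial conserved quantity $g(t)$ of $(\Lambda,\eta)$ of degree $d$ with $g(0)\in\langle v_\infty\rangle$ and $(g(t),g(t))=(ts(t),ts(t))=t^2(s(t),s(t))$. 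Concretely, $g(t)=\Gamma^c(t)^{-1}(ts(t))$, and the potential simple pole at $t=0$ is cancelled by the factor of $t$.

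The next step is to use the codimension-$1$ assumption together with the hypothesis that $(\Lambda,\eta)$ is not special of type $d-1$. By Proposition~\ref{csc}, the top coefficient of any polynomial conserved quantity of $(\Lambda,\eta)$ is a section of the bundle $V_{csc}^\perp$ parallel for the normal connection. Because $n=3$, this bundle has rank one, so the top coefficients $g_d$ and $p_d$ are scalar multiples of one another. Hence there is a constant $\alpha$ for which $p(t)-\alpha g(t)$ has degree at most $d-1$; were this polynomial nonzero, it would provide a polynomial conserved quantity of degree at most $d-1$, contradicting the hypothesis that $(\Lambda,\eta)$ is not special of type $d-1$. Therefore $p(t)=\alpha g(t)$, and $\alpha\neq 0$ since $p(t)$ has degree exactly $d$.

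Both conclusions of the proposition now follow immediately. From $p(t)=\alpha g(t)$ and $g(0)\in\langle v_\infty\rangle$ we obtain $p(0)\in\langle v_\infty\rangle$; and
\begin{equation*}
(p(t),p(t))=\alpha^{2}(g(t),g(t))=\alpha^{2}t^{2}(s(t),s(t)),
\end{equation*}
so $0$ is a (at least) double root of $(p(t),p(t))$. The main obstacle I anticipate is the clean bookkeeping needed to verify that $\Gamma^c(t)^{-1}(ts(t))$ really is a polynomial conserved quantity of $(\Lambda,\eta)$ with constant term in $\langle v_\infty\rangle$, i.e.~that Theorem~\ref{theorem-Christoffel} can be invoked ``in reverse'' on $ts(t)$ thanks to its vanishing at $t=0$; once this is in hand, the remainder of the argument is a short chase using only the orthogonality of the gauge transformation $\Gamma^c(t)$ and the rank-one rigidity of $V_{csc}^\perp$ in codimension one.
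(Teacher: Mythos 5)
Your proposal is correct and follows essentially the same route as the paper: pull back $ts(t)$ through the inverse Christoffel gauge $\Gamma^{c}(t)^{-1}$ to get a degree-$d$ conserved quantity $g(t)$ of $(\Lambda,\eta)$ with $g(0)\in\langle v_{\infty}\rangle$, use the codimension-one rigidity of the top coefficient plus the ``not type $d-1$'' hypothesis to force $p(t)=\alpha g(t)$, and read off both conclusions. Your write-up is in fact slightly more careful than the paper's on the two points it leaves implicit (the cancellation of the pole at $t=0$ and the justification of $p(t)=\alpha g(t)$ via Proposition~\ref{csc}).
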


\begin{remark}
Consider a special isothermic surface $(\Lambda,\eta)$ in $S^{n}$ of
type $d\in\mathbb{N}$, with respect to a polynomial $p(t)$. If
$p_{0}:=p(0)\in\mathcal{L}$, we can consider $p_{0}$ as the point at
infinity $v_{\infty}$, and apply all these results. Note that
$p_{0}\in\mathcal{L}$ if and only if $p_{0}\neq0$ and $0$ is a
root of $(p(t),p(t))$.

Observe that if $(p(t),p(t))$ admits a repeated root $m$, we can
therefore conclude, in general, that $(\Lambda,\eta)$ admits Darboux
transform(s) (which will be complementary surfaces) or Christoffel
transforms which are special isothermic surfaces of type $d-1$.
\end{remark}

\subsection{T-transforms of a special isothermic surface}

\begin{theorem}\label{T-transforms}If $(\Lambda,\eta)$ is a special isothermic surface in $S^{n}$
of type $d\in\mathbb{N}_{0}$ with respect to a polynomial $p(t)$,
then $(\Lambda_{s},\eta_{s})$ is a special isothermic surface of
type $d$, with respect to a polynomial with constant term equal to
$\Phi_{s}(p(s))$.
\begin{proof}
Given a polynomial conserved quantity $p(t)$ of $(\Lambda,\eta)$,
$p(t+s)$ is $(\D+(t+s)\eta)$-parallel, which guarantees that
$q(t):=\Phi_{s}p(t+s)$ is parallel with respect to
$\D+t\eta_{s}=\Phi_{s}\cdot(\D+(t+s)\eta)$.
\end{proof}
\end{theorem}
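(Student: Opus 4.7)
The plan is to exploit the intertwining identity $\Phi_{s}\cdot(\D+(t+s)\eta)=\D+t\eta_{s}$ supplied by the T-transform proposition, noting that the gauge transformation $\Phi_{s}$ does not depend on $t$. First I would observe that if $p(t)=\sum_{k=0}^{d}p_{k}t^{k}$ is a polynomial conserved quantity of $(\Lambda,\eta)$, then its formal translate $p(t+s)$ is again polynomial in $t$ of the same degree $d$, since $s$ is fixed. Evaluating the identity $\nabla^{r}p(r)\equiv 0$ at $r=t+s$ shows that $p(t+s)$ is $(\D+(t+s)\eta)$-parallel.

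Next I would set $q(t):=\Phi_{s}\,p(t+s)$. Since $\Phi_{s}$ is $t$-independent and fibrewise linear, $q(t)$ is polynomial in $t$ of degree at most $d$, with top coefficient $\Phi_{s}(p_{d})$. By the intertwining identity, $q(t)$ is $(\D+t\eta_{s})$-parallel, so $q(t)$ is a conserved quantity of $(\Lambda_{s},\eta_{s})$. To confirm that this really has degree $d$, one only needs that $\Phi_{s}(p_{d})$ is non-zero: this follows because $\Phi_{s}$ is an orthogonal bundle automorphism, hence fibrewise invertible, so it cannot annihilate the non-vanishing section $p_{d}$.

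Finally, evaluating at $t=0$ gives $q(0)=\Phi_{s}(p(s))$, which is exactly the constant term claimed. There is essentially no obstacle here; the result is a formal consequence of the gauge intertwining property, with the only mild point being to verify that the two operations of shifting the spectral parameter by $s$ and of applying $\Phi_{s}$ each preserve both the polynomial character and the degree of the conserved quantity.
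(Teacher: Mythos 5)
Your argument is correct and is essentially identical to the paper's proof: translate the spectral parameter by $s$, apply the gauge transformation $\Phi_{s}$, and invoke the intertwining identity $\Phi_{s}\cdot(\D+(t+s)\eta)=\D+t\eta_{s}$. The additional checks you record (degree preservation via invertibility of $\Phi_{s}$ and the evaluation $q(0)=\Phi_{s}(p(s))$) are left implicit in the paper but are exactly the right ones.
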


The corresponding classical result of $T$-transforms of a special
isothermic surfaces can be found in \cite{Bia05a}, which states that
the $T$-transforms of a special isothermic surface are special
isothermic surfaces, but in general of a different class and in a
different space-form.

\medskip

Lawson correspondence is a consequence of Theorem \ref{T-transforms}
when we consider the case of special isothermic surfaces in $S^{3}$
of type $1$, which is related, as we know, to constant mean
curvature surfaces in a $3$-dimensional space-form. Indeed, consider
an arbitrary full isothermic surface $(\Lambda,\eta)$ in $S^{3}$
which is a constant mean curvature surface in a space-form $E(w)$
(with sectional curvature equal to $K:=-(w,w)$). Denote by $F$ the
lift of $\Lambda$ which lives in $E(w)$, by $N$ a unit normal to $F$ in $E(w)$
and finally by $H$ the (constant) mean curvature of $F$ with respect to $N$. We have
that $p(t):=w+p_{1}t$, with $p_{1}=HF+N$, is a polynomial conserved
quantity of $(\Lambda,\eta)$, considering $\eta=F\wedge \D p_{1}$.

Given an arbitrary $T$-transform $(\Lambda_{s},\eta_{s})$ of
$(\Lambda,\eta)$ (associated to a $\Phi_{s}$), take the polynomial
conserved quantity
\begin{equation*}
q(t):=\Phi_{s}p(t+s)=\Phi_{s}(p(s))+\Phi_{s}(p_{1})t
\end{equation*}
of $(\Lambda_{s},\eta_{s})$. Therefore we conclude that
$\Lambda_{s}$ is a constant mean curvature surface in the space-form
$E(\Phi_{s}(p(s)))$, which has sectional curvature
$K_{s}=-(\Phi_{s}(p(s)),\Phi_{s}(p(s)))$, with (constant) mean
curvature equal to $H_{s}:=-(\Phi_{s}(p_{1}),\Phi_{s}(p(s)))$, with respect to $\Phi_{s}(N+sF)$. Since
\begin{equation*}
(q(t),q(t))=(\Phi_{s}p(t+s),\Phi_{s}p(t+s))=(p(t+s),p(t+s)),
\end{equation*}
we obtain, in particular, that the discriminants of both second
degree polynomials
\begin{equation*}
(q(t),q(t))=-K_{s}-2H_{s}t+t^{2} \mbox{ and
}(p(t),p(t))=-K-2Ht+t^{2}
\end{equation*}
are equal. Consequently, $4H_{s}^{2}+4K_{s}=4H^{2}+4K$, i.e.,
\begin{equation*}
H_{s}^{2}+K_{s}=H^{2}+K.
\end{equation*}

\section{Spherical system and sphere-planes}\label{spherical system and sphere-planes}

\subsection{Spherical system and sphere-planes of complementary surfaces}

An isothermic surface $(\Lambda,\eta)$ in $S^{n}$ together with a
Darboux transform $\hat{\Lambda}$ define two
interesting sphere congruences.  First, $\Lambda,\hat{\Lambda}$
envelop the congruence of $2$-spheres given by
\begin{equation*}
V_{R}:=\Lambda^{(1)}\oplus\hat{\Lambda}=\Lambda\oplus\hat{\Lambda}^{(1)}.
\end{equation*}
Second, define the \emph{spherical system} of $\Lambda$ and
$\hat{\Lambda}$ as the $(n-2)$-sphere congruence
\begin{equation*}
\mathcal{C}:=\Lambda\oplus\hat{\Lambda}\oplus
V_{R}^{\perp}=(\Lambda^{(1)})^{\perp}\oplus\hat{\Lambda}. 
\end{equation*}
This is the family of codimension $2$ spheres that cut both $\Lambda$
and $\hat{\Lambda}$ orthogonally at each point.  Orthoprojection of
$\D$ onto $\mathcal{C}$ defines a connection $\mathcal{D}$ which is
flat: indeed, $\Lambda\oplus\hat{\Lambda}$ and $V_R$ are parallel
subbundles on which $\mathcal{D}$ is flat, $V_R$ since $\Lambda$ has
flat normal bundle and $\Lambda\oplus\hat{\Lambda}$ since
$\mathcal{D}$ is also the orthoprojection of $\D+m\eta$ with respect
to which $\hat{\Lambda}$ is parallel by definition.  In codimension
$1$, this flatness amounts to the assertion that $\mathcal{C}$
defines a \emph{cyclic system} of circles, whence our terminology.

Finally, let $w\in\mathbb{R}_{\times}^{n+1,1}$ and set
\begin{equation*}
\mathcal{P}:=\mathcal{C}+\langle w\rangle.
\end{equation*}
On the open set where $w\notin \mathcal{C}$, $\mathcal{P}$ defines a
congruence of hyperspheres we call the \emph{sphere-planes
congruence} of $\Lambda$ and $\hat{\Lambda}$ with respect to $w$.
Geometrically, $\mathcal{P}$ is the unique congruence of totally
geodesic hyperspheres in $E(w)$ containing the spherical system.

Darboux \cite{Dar99b} proves that an isothermic surface in $\R^3$ is
special in the classical sense if and only if it admits a pair of
possibly complex conjugate Darboux transforms for which the
corresponding circle-planes coincide and, in this case, those Darboux
transforms are complementary surfaces.  This all goes through in
arbitrary codimension and for arbitrary space-forms at least in the
case of real sphere-planes.

\begin{theorem}\label{theorem of sphere-planes}
Let $(\Lambda,\eta)$ be a full isothermic surface in $S^{n}$, 
$n\geq3$,  with two Darboux transforms
$\hat{\Lambda}_{1}$ and $\hat{\Lambda}_{2}$ of $(\Lambda,\eta)$
of parameters $m_1\neq m_2$, such that there is a
$w\in\mathbb{R}^{n+1,1}\backslash\Lambda^{\perp}_{x}$, for all $x$,
which never belongs to the spherical systems of $\hat{\Lambda}_{1}$
and $\hat{\Lambda}_{2}$ \footnote{that is, the spherical systems of the
two Darboux pairs $\Lambda$ and $\hat{\Lambda}_{1}$, and $\Lambda$
and $\hat{\Lambda}_{2}$.}. If the sphere-planes of
$\hat{\Lambda}_{1}$ and $\hat{\Lambda}_{2}$ associated to $w$
coincide and contain no principal direction of $\Lambda$, then
$(\Lambda,\eta)$ is a special isothermic surface of type $2$, with
respect to a polynomial $p(t)$ with 
$p_0\in\langle w\rangle$, and such that $\hat{\Lambda}_{1}$ and
$\hat{\Lambda}_{2}$ are complementary surfaces of $(\Lambda,\eta)$
with respect to $p(t)$.

Conversely, if $(\Lambda,\eta)$ is special isothermic of type $2$ in $E(w)$
with complementary surfaces $\hat{\Lambda}_1,\hat{\Lambda}_2$ then
the sphere-planes through each $\hat{\Lambda}_i$ and $w$ coincide.
\begin{proof}
Let $F_{i}$ be a $(\D+m_{i}\eta)$-parallel lift of
$\hat{\Lambda}_{i}$. We assume that the sphere-planes coincide:
\begin{equation*}
\mathcal{P}=\Lambda\oplus\hat{\Lambda}_{1}\oplus V_{1}^{\perp}\oplus\langle
w\rangle=\Lambda\oplus\hat{\Lambda}_{2}\oplus
V_{2}^{\perp}\oplus\langle w\rangle,
\end{equation*}
where $V_{i}$ is the enveloped sphere congruence of $\Lambda$ and
$\hat{\Lambda}_{i}$.  We can therefore write
\begin{equation}\label{equation of the motivation}
F_{1}=\xi+\beta F_{2}+Q
\end{equation}
where $\beta$ is a function, $\xi\in\Gamma(\Lambda^{(1)})^{\perp}$
and $Q\in\Gamma\langle w\rangle$.  We are going to prove that $\beta$
and $Q$ are constant.

As in \eqref{eq:2}, for a lift $F\in\Gamma\Lambda$, we write
\begin{equation*}
\eta=e^{-2\theta}F\wedge(-F_u\D u+F_v\D v),
\end{equation*}
for curvature line coordinates $u,v$.  Now apply $\del/\del
u+m_1\eta_{\del/\del u}$ to \eqref{equation of the motivation} to get
\begin{equation*}
0=\xi_u+\beta_uF_2+(m_1-m_2)\beta\eta_{\del/\del u}F_2+Q_u+m_1\eta_{\del/\del u}Q.
\end{equation*}
Let $\pi$ denote orthoprojection onto $V_2$ and set
$\pi^{\perp}=1-\pi$.  We apply $\pi$ to the last equation and
rearrange to get:
\begin{equation*}
\pi(\xi_u)+(m_1-m_2)\beta\eta_{\del/\del u}F_2+m_1\eta_{\del/\del u}Q=-\beta_uF_2-Q_u+\pi^{\perp}(Q_{u}).
\end{equation*}
Here the left hand side takes values in $\Lambda_{u}=\langle
F,F_{u}\rangle$ since $u,v$ are curvature line coordinates, while the
right hand side lies in $\mathcal{P}$.  Our assumption on principal
directions is that $\Lambda_u\cap \mathcal{P}=\Lambda$ from which we
deduce that $Q_u$ takes values in
$\Lambda\oplus\hat{\Lambda}_{2}\oplus V_{2}^{\perp}$.  However, this
last has trivial intersection with $\langle w\rangle$ so that
$Q_u=0$.  Since $F_{2}$ does not lie in $\Lambda$, we now conclude
that $\beta_u=0$ also.   Similarly, we obtain $Q_{v}=0$ and $\beta_{v}=0$.

Now let $p(t)=p_{0}+p_{1}t+p_{2}t^{2}$ be the polynomial 
such that
\begin{equation*}
p(m_{1})=m_{1}F_{1},\;p(m_{2})=m_{2}\beta F_{2}\mbox{ and
}p(0)=\frac{m_1m_2}{m_2-m_1}Q.
\end{equation*}
One readily computes from \eqref{equation of the motivation} that
$p_{2}=\xi/(m_1-m_2)\in\Gamma(\Lambda^{(1)})^{\perp}$ so that
$\eta p_{2}=0$.  Thus $(\D+t\eta)p(t)$ is quadratic in $t$ with zeros
at $m_{1},m_2$ and $0$ and so vanishes identically.  We conclude that
$p(t)$ is our desired polynomial conserved quantity.

For the converse, if $\Lambda$ has a polynomial conserved quantity
$p(t)=p_{0}+p_{1}t+p_{2}t^{2}$ and $\hat{\Lambda}=\langle
p(m)\rangle$ is a complementary surface then the corresponding
spherical system is $\mathcal{C}=(\Lambda^{(1)})^{\perp}\oplus\langle
p(m)\rangle$ so that the sphere-planes are given by
\begin{equation*}
\mathcal{C}+\langle w\rangle=(\Lambda^{(1)})^{\perp}+\langle
p_{0}+p_{1}m+p_{2}m^{2}\rangle+\langle w\rangle=
(\Lambda^{(1)})^{\perp}+\langle w,p_{1}\rangle,
\end{equation*}
which is independent of $m$.
\end{proof}
\end{theorem}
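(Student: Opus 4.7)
The plan is to prove both directions by analyzing the algebraic decomposition afforded by the coinciding sphere-planes.

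For the forward direction, I would first choose $(\D+m_i\eta)$-parallel lifts $F_i$ of $\hat{\Lambda}_i$. Since $w\notin\mathcal{C}_2$, the common sphere-plane admits the direct-sum decomposition $\mathcal{P}=(\Lambda^{(1)})^{\perp}\oplus\hat{\Lambda}_2\oplus\langle w\rangle$, and, since $F_1\in\hat{\Lambda}_1\subset\mathcal{P}$, I can write
\begin{equation*}
F_1=\xi+\beta F_2+Q,
\end{equation*}
with $\xi\in\Gamma(\Lambda^{(1)})^{\perp}$, $Q\in\Gamma\langle w\rangle$, and $\beta$ a smooth function. The heart of the argument is to show that $\beta$ and $Q$ are constant. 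To this end, I would differentiate in $u$ and substitute the parallelism identities $\partial F_1/\partial u=-m_1\eta_{\partial/\partial u}F_1$, $\partial F_2/\partial u=-m_2\eta_{\partial/\partial u}F_2$, and use $\eta_{\partial/\partial u}\xi=0$ (valid because $\xi\perp F,F_u$). Projecting the resulting equation onto $V_2$ via $\pi$, the $\eta$-terms manifestly land in $\Lambda_u=\langle F,F_u\rangle$, and a short pairing computation using $F_{uv}\in\Lambda^{(1)}$ (off-diagonal vanishing in curvature line coordinates) together with the $\nabla^{m_2}$-parallelism of $F_2$ shows that $\pi(\xi_u)\in\Lambda_u$ as well. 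After rearrangement the left side lies in $\Lambda_u$ and the right side in $\mathcal{P}$; the principal-direction hypothesis forces $\Lambda_u\cap\mathcal{P}=\Lambda$, so the $\langle w\rangle$-component of $Q_u$ must vanish, whence $Q_u=0$ and then $\beta_u=0$. The $v$-derivative case is identical.

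With $\beta,Q$ now constant, I would define the quadratic $p(t)$ by Lagrange interpolation imposing
\begin{equation*}
p(0)=\tfrac{m_1m_2}{m_2-m_1}Q,\qquad p(m_1)=m_1F_1,\qquad p(m_2)=m_2\beta F_2.
\end{equation*}
A routine computation gives the top coefficient $p_2=\xi/(m_1-m_2)\in(\Lambda^{(1)})^{\perp}$, so $\eta p_2=0$ and $(\D+t\eta)p(t)$ is at most quadratic in $t$. As it already vanishes at $t=0,m_1,m_2$, it vanishes identically, so $p(t)$ is the desired polynomial conserved quantity, its constant term lying in $\langle w\rangle$; since each $p(m_i)$ is a non-zero null lift of $\hat{\Lambda}_i$, the $\hat{\Lambda}_i$ are the complementary surfaces associated to $p(t)$.

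For the converse, given a polynomial conserved quantity $p(t)=p_0+p_1t+p_2t^2$ with $p_0\in\langle w\rangle$ and $\hat{\Lambda}_i=\langle p(m_i)\rangle$, Proposition~\ref{csc} gives $p_2\in V_{csc}^{\perp}\subset(\Lambda^{(1)})^{\perp}$, so
\begin{equation*}
\mathcal{P}_i=\mathcal{C}_i+\langle w\rangle=(\Lambda^{(1)})^{\perp}\oplus\langle p(m_i)\rangle+\langle w\rangle=(\Lambda^{(1)})^{\perp}+\langle w,p_1\rangle,
\end{equation*}
which is manifestly independent of $i$. The main technical obstacle will be the verification that $\pi(\xi_u)\in\Lambda_u$: this requires computing all pairings of $\xi_u$ with a basis of $V_2$ and exploiting both the curvature-line-coordinate property and the parallelism of $F_2$ in a coordinated way.
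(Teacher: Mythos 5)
Your proposal is correct and follows essentially the same route as the paper's proof: the same decomposition $F_1=\xi+\beta F_2+Q$ relative to the common sphere-plane, the same differentiation-and-projection argument using the principal-direction hypothesis to force $Q$ and $\beta$ constant, the same Lagrange interpolation of $p(t)$ at $0,m_1,m_2$, and the same computation $(\Lambda^{(1)})^{\perp}+\langle w,p_1\rangle$ for the converse. The only difference is that you spell out why $\pi(\xi_u)\in\Lambda_u$ (via $(\xi_u,F)=0$ and $(\xi_u,F_v)=-(\xi,F_{uv})=0$), which the paper leaves implicit.
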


A similar but simpler argument shows that the stronger
condition of coincident spherical systems characterises those special
isothermic surfaces of type $1$ that admit two complementary
surfaces\footnote{In codimension $1$, these are the surfaces of
constant mean curvature $H$ in a space-form of sectional curvature
$K$ for which $H^2+K>0$.}:
\begin{theorem}
Let $(\Lambda,\eta)$ be a full isothermic surface in $S^{n}$, 
$n\geq3$,  with two Darboux transforms
$\hat{\Lambda}_{1}$ and $\hat{\Lambda}_{2}$ of $(\Lambda,\eta)$
of parameters $m_1\neq m_2$.   The spherical systems through 
$\hat{\Lambda}_{1}$ and $\hat{\Lambda}_{2}$ coincide if and only if
$\Lambda$ is special isothermic of type $1$ with the
$\hat{\Lambda}_i$ complementary surfaces.
\end{theorem}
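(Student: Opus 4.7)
The plan is to mimic the proof of Theorem~\ref{theorem of sphere-planes}, with the crucial simplification that no auxiliary vector $w$ enters. For the forward direction, the coincidence $\mathcal{C}_1 = \mathcal{C}_2$ of spherical systems forces $\hat{\Lambda}_1 \subset (\Lambda^{(1)})^{\perp} \oplus \hat{\Lambda}_2$, so that for $(\D+m_i\eta)$-parallel lifts $F_i$ of $\hat{\Lambda}_i$ we can write
\begin{equation*}
F_1 = \xi + \beta F_2,
\end{equation*}
with $\xi \in \Gamma(\Lambda^{(1)})^{\perp}$ and $\beta$ a nowhere-zero function on $\Sigma$. Applying $\D + m_1\eta$ to this decomposition, and using both that $\eta$ annihilates $(\Lambda^{(1)})^{\perp}$ (immediate from $\eta = F \wedge (\D F\circ Q)$) and the parallelism of $F_2$, I obtain
\begin{equation*}
\D\xi + \D\beta\cdot F_2 + \beta(m_1 - m_2)\,\eta F_2 = 0.
\end{equation*}

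The key step, and the only real obstacle, is to show $\beta$ is constant. I would pair the last equation with a lift $F$ of $\Lambda$: the term $(\D\xi, F) = -(\xi, \D F)$ vanishes since $\xi \in (\Lambda^{(1)})^{\perp}$ and $\D F$ takes values in $\Lambda^{(1)}$; likewise $(\eta F_2, F) = -(F_2, \eta F) = 0$ because $\eta F = 0$. The equation thus collapses to $\D\beta\cdot(F, F_2) = 0$, and $(F, F_2)$ is nowhere zero because $F^{\perp}$ meets $\mathcal{L}$ only along $\Lambda$ while $\hat{\Lambda}_2 \cap \Lambda = \{0\}$. Hence $\beta$ is constant, and by rescaling $F_2$ I may assume $\beta = 1$. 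This step is much cleaner than its counterpart in Theorem~\ref{theorem of sphere-planes}, where an extra term in $\langle w\rangle$ had to be killed via curvature-line coordinates.

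Setting $p_1 := \xi/(m_1 - m_2)$ and $p_0 := F_1 - m_1 p_1 = F_2 - m_2 p_1$, I define $p(t) := p_0 + p_1 t$, so that $p(m_i) = F_i \in \Gamma\hat{\Lambda}_i$. Since $p_1 \in (\Lambda^{(1)})^{\perp}$ implies $\eta p_1 = 0$, the expression $(\D+t\eta)p(t)$ is polynomial of degree at most one in $t$; it vanishes at $t = m_1$ and at $t = m_2$, hence identically, so $p(t)$ is the required polynomial conserved quantity exhibiting $\Lambda$ as special isothermic of type~$1$ with complementary surfaces $\hat{\Lambda}_i$.

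For the converse, if $p(t) = p_0 + p_1 t$ is a polynomial conserved quantity with $\hat{\Lambda}_i = \langle p(m_i)\rangle$, then Proposition~\ref{csc} gives $p_1 \in \Gamma V_{csc}^{\perp} \subset \Gamma(\Lambda^{(1)})^{\perp}$, whence
\begin{equation*}
\mathcal{C}_i = (\Lambda^{(1)})^{\perp} \oplus \langle p_0 + m_i p_1 \rangle = (\Lambda^{(1)})^{\perp} \oplus \langle p_0 \rangle,
\end{equation*}
which is manifestly independent of $i$.
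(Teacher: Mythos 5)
Your proposal is correct and follows essentially the same route as the paper: the decomposition $F_1=\xi+\beta F_2$ with $\xi\in\Gamma(\Lambda^{(1)})^{\perp}$, applying $\D+m_1\eta$, concluding $\beta$ is constant, and then interpolating $p(t)=p_0+p_1t$ through $F_1$ at $m_1$ and (a multiple of) $F_2$ at $m_2$ so that $p_1=\xi/(m_1-m_2)$ kills $\eta p_1$ and $(\D+t\eta)p(t)$, being affine in $t$ with two zeros, vanishes. Your pairing of the equation with a lift $F$ of $\Lambda$ is just the paper's reduction modulo $\Lambda^{\perp}$ in different clothing (since $\Lambda^{\perp}=\ker(\,\cdot\,,F)$), and your explicit converse fills in what the paper dismisses as straightforward.
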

\begin{proof}
If the spherical systems coincide, we have, with $F_i$
$(\D+m_i\eta)$-parallel lifts of $\hat{\Lambda}_i$,
\begin{equation*}
F_1=\xi+\beta F_2,
\end{equation*}
for a function $\beta$ and $\xi\in\Gamma(\Lambda^{(1)})^{\perp}$.
Applying $\D+m_1\eta$ to this yields
\begin{equation*}
0=\D\xi+\D\beta F_2+(m_1-m_2)\eta F_2\equiv\D\beta F_2\mod\Lambda^{\perp}
\end{equation*}
so that $\beta$ is constant.  Now define $p(t)=p_0+p_{1}t$ by
requiring that 
\begin{equation*}
p(m_1)=F_1,\qquad p(m_2)=\beta F_2.
\end{equation*}
One computes that $p_1=\xi/(m_1-m_2)$ so that $\eta p_1=0$. Now
$(\D+t\eta)p(t)$ is first order in $t$ with zeros at $m_1,m_2$ and so
vanishes.

The converse is straightforward.
\end{proof}

\subsection{Enveloping surfaces of circle-planes of complementary
surfaces}

Darboux \cite[p.~507]{Dar99b} (see also \cite[\S15]{Bia05}) proves
that, for a special isothermic surface in $\R^{3}$,
the enveloping surface of the congruence of circle-planes of the
complementary surfaces is isometric to a quadric in
$\mathbb{R}^{2,1}$. We now prove a version of this result in arbitrary
codimension.

Let $(\Lambda,\eta)$ be a special isothermic surface of type $2$ in
$\R^n=E(w)$, $w\in\mathcal{L}$, with respect to the polynomial
$p(t)=p_{0}+p_{1}t+p_{2}t^{2}$. Assume that $p_{2}$ is a unit section
and write $p_{0}=Bw$ for a constant $B$ (possibly zero). Let
$\hat{\Lambda}=\langle p(m)\rangle$ be a complementary surface of
$(\Lambda,\eta)$.

Contemplate the congruence of circles
$\mathcal{C}=\Lambda\oplus\hat{\Lambda}\oplus\langle p_{2}\rangle$
which cuts both $\Lambda$ and $\hat\Lambda$ orthogonally in the
direction defined by $p_2$.  The congruence
$\mathcal{P}=\mathcal{C}\oplus\langle w\rangle$ of planes through
these circles is defined whenever $w\notin\mathcal{C}$.

\begin{theorem}
\item{}
\begin{enumerate}
\item The congruence $\mathcal{P}$ generically admits an enveloping
surface $\Lambda_{e}$.
\item The section of $\Lambda_{e}$ which takes values in $E(w)$ is
locally isometric to a quadric in $\mathbb{R}^{2,1}$.
\end{enumerate}
\end{theorem}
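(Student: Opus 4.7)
The plan is to construct $\Lambda_e$ as a specific null line subbundle of $\mathcal{P}=\Lambda\oplus\hat{\Lambda}\oplus\langle p_2\rangle\oplus\langle w\rangle$ directly from the polynomial conserved quantity $p(t)=p_0+p_1 t+p_2 t^2$, and then to identify its image in $E(w)$ with a local isometric immersion onto the null quadric cut from $\mathbb{R}^{2,1}$ by the constant Gram form of the frame $\{p_1,p_2,w\}$.

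For part (1), I would seek a null section $F_e=\alpha F+\beta p_1+\gamma p_2+\delta w$ of $\mathcal{P}$, with coefficients $\alpha,\beta,\gamma,\delta\in C^{\infty}(\Sigma)$, subject to $\D F_e\in\Omega^1(\mathcal{P})$ (equivalently, $\mathcal{P}$ envelopes $\Lambda_e=\langle F_e\rangle$). The derivatives of $p_1,p_2$ come from $(\D+t\eta)p(t)\equiv 0$: its $t$- and $t^2$-coefficients give $\D p_1=-B\,\eta w$ and $\D p_2=-\eta p_1$. Since $F$ lifts into $E(w)$, the formula $\eta=F\wedge(\D F\circ Q)$ reduces $\eta w$ to $-\D F\circ Q$, while $\eta p_1$ becomes $(F,p_1)\,\D F\circ Q-(\D F\circ Q,p_1)F$. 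Substituting into $\D F_e$ and projecting onto $\D F(T\Sigma)$ modulo $\mathcal{P}$ converts the envelope condition into an algebraic system on the coefficients; exploiting that $Q$ is symmetric and trace-free forces $\alpha=0$ and $\beta B=\gamma(F,p_1)$ generically. Combined with the quadratic nullity equation $(F_e,F_e)=0$, this determines $\Lambda_e$ up to a discrete choice of null direction in $\mathcal{P}$ distinct from $\Lambda$ and $\hat{\Lambda}$, producing the envelope.

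For part (2), normalize the lift to $E(w)$ by $(F_e,w)=-1$, which gives a third linear relation $\beta(p_1,w)+\gamma(p_2,w)=-1$ among the coefficients. Under the mild genericity hypothesis that $\{p_1,p_2,w\}$ is pointwise linearly independent with Gram matrix $G$ of Lorentzian signature $(2,1)$, the coordinate map $\phi=(\beta,\gamma,\delta)\colon\Sigma\to\mathbb{R}^{2,1}$ takes values in the null quadric $\{\phi^{T}G\phi=0\}$ by nullity of $F_e$. The claim is that $\phi$ is the sought local isometry between the metric induced on $\Lambda_e\subset E(w)$ and this quadric. Expanding $(\D F_e,\D F_e)$ using the parallel-section formulae and the envelope constraint $\beta B=\gamma(F,p_1)$, the $\beta\,\D p_1+\gamma\,\D p_2$ contribution collapses into a section of $\Lambda$, producing a principal piece equal to $G(\D\phi,\D\phi)$ plus a correction proportional to $\gamma(\D F\circ Q,p_1)\bigl((F,p_1)\,\D\beta-\D\delta\bigr)$.

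The main obstacle is showing that this correction vanishes. This should follow by differentiating the three pinning constraints (envelope, nullity, and $w$-normalization) and exploiting constancy of the inner products $(p_i,p_j),(p_i,w)$ together with the diagonal form of $Q$ in conformal curvature-line coordinates; the verification reduces to a finite symbolic calculation. Once established, $\phi$ is the required isometric immersion onto a quadric in $\mathbb{R}^{2,1}$, and the codimension-one case $n=3$ recovers Darboux's classical quadric associated to the class $(A,B,C,D)$.
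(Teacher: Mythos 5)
Your part~(1) is essentially the paper's argument in a different frame: writing the candidate section as $\alpha F+\beta p_1+\gamma p_2+\delta w$, using $\D p_1=-\eta p_0$ and $\D p_2=-\eta p_1$, and forcing the $\D F\circ Q$-component to die does indeed give $\alpha=0$ and $\beta B=\gamma(F,p_1)$ (equivalently the paper's condition on the ratio of the $p_2$- and $p(m)$-coefficients), after which nullity fixes $\delta$ whenever $(\beta p_1+\gamma p_2,w)\neq0$. That part is fine.

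Part~(2) has a genuine gap, and it is not just the deferred ``finite symbolic calculation'': the premise that the Gram matrix of $\{p_1,p_2,w\}$ is constant is false, so the quadric $\{\phi^{T}G\phi=0\}$ is not a fixed quadric in $\mathbb{R}^{2,1}$ at all. Proposition~\ref{csc} only makes the coefficients of $(p(t),p(t))$ constant; the $t^2$-coefficient is $(p_1,p_1)+2B(p_2,w)$, and $(p_2,w)$ is \emph{not} constant: $\D(p_2,w)=-(\eta p_1,w)=-(\D F\circ Q,p_1)$, which vanishes only when $p_1\perp\D F(T\Sigma)$, i.e.\ (in codimension one, where $(p_2,w)=-H$) only for constant mean curvature, the degenerate type-$1$ case. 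Hence both $(p_2,w)$ and, when $B\neq0$, $(p_1,p_1)$ vary over $\Sigma$, your target quadric moves with the point, and the asserted isometry $\phi=(\beta,\gamma,\delta)$ cannot land in a single quadric in that frame. The correction term you hope cancels is precisely the manifestation of the frame $\{p_1,p_2,w\}$ not being parallel. The paper avoids all of this by working inside the spherical system $\mathcal{C}=\Lambda\oplus\hat{\Lambda}\oplus\langle p_2\rangle$ with the \emph{flat} connection $\mathcal{D}$ given by orthoprojection of $\D$ onto $\mathcal{C}$: in a $\mathcal{D}$-parallel frame $\{F,p(m),Z\}$ the isometry is automatic, since the envelope condition gives $\D G_1\in\Omega^1(\mathcal{C})$ and $(\D G_1,w)=0$, whence $(\D G,\D G)=(\mathcal{D}G_1,\mathcal{D}G_1)=(\D g,\D g)$ with no correction to check; and the fixed quadric equation is obtained only after eliminating the one non-constant inner product $(p_2,p_0)$ using the algebraic relation $\gamma=-\alpha(p_2,p(m))$ between the frame coefficients. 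To repair your argument you would need to pass to such a parallel frame (or prove directly that the particular combination of Gram entries surviving on the constrained locus $\alpha=0$, $\beta B=\gamma(F,p_1)$, $(F_e,F_e)=0$, $(F_e,w)=-1$ is constant), which is where the actual content of the theorem lies.
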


\begin{proof}
Consider sections of $\mathcal{P}$ of the form $G=\alpha p_{2}+\beta
p(m)+sw=G_1+sw$ for functions $\alpha,\beta,s$.  Such a section will span an
enveloping surface if it is null and $\D G_{1}$ takes values in
$\mathcal{P}$.  However, $\D p_{2}$ is proportional to $\D
p(m)=-m\eta\,p(m)$ modulo $\Lambda$ (the trace-free second
fundamental form of $\Lambda$ is proportional to the holomorphic
quadratic differential $q$) so that $\alpha,\beta$ can be chosen to
ensure that $\D G_1\in\Omega^1(\mathcal{C})$.  Now $s$ can be
chosen to make $G$ null so long as $(G_1,w)$ is non-zero.

In this situation, we normalise so that $(G,w)=-1$ and let
$\Lambda_e$ be the span of $G$.  We now show that $G$ is isometric to
a quadric.

Observe that $p(m)$ is a $\mathcal{D}$-parallel section of
$\mathcal{C}$ where $\mathcal{D}$ is the flat connection on
$\mathcal{C}$ given by the orthoprojection of $\D$ onto $\mathcal{C}$
(indeed, $\mathcal{C}$ is a parallel subbundle of the spherical
system through $p(m)$).  Now construct a parallel frame of
$\mathcal{C}$ by taking $F\in\Gamma\Lambda$ with $(F,p(m))=-1$ and
setting $Z:=(p_{2},p(m))F+p_{2}$: a unit section of $\mathcal{C}$
orthogonal to $\Lambda$ and $\hat\Lambda$. We have
\begin{equation*}
G_{1}=\alpha p_{2}+\beta p(m)=\alpha (Z-(p_{2},p(m))F)+\beta
p(m)=\alpha Z+\beta p(m)+\gamma F,
\end{equation*}
where
\begin{equation}
\label{eq:12}
\gamma=-\alpha(p_{2},p(m)).
\end{equation}

Since $(G,w)=-1$, equivalently, $(G_{1},w)=-1$, we obtain
\begin{equation*}
\alpha(p_{2},w)+\beta(p(m),w)=-1,
\end{equation*}
so that
\begin{equation*}
\alpha^{2}(p_{2},p_{0})+\alpha\beta (p(m),p_{0})+\alpha B=0.
\end{equation*}
Now substitute \eqref{eq:12} to eliminate the non-constant inner
product $(p_2,p_0)$ and deduce
\begin{multline}
\label{eq:13}
-\bigl(m^2+m(p_2,p_1)\bigr)\alpha^2
+\bigl(m(p_{1},p_{0})-m^4-m^3(p_2,p_1)\bigr)\alpha\beta\\
-\alpha\gamma-m^{2}\beta\gamma+B\alpha=0.
\end{multline}
Note that all the inner products $(p_i,p_j)$ occurring in \eqref{eq:13} are
constant.

Since $\mathcal{D}$ is flat, there are local gauge transformations
$\Psi:(\mathcal{C},\mathcal{D})\cong(\underline{\R}^{2,1},\D)$ and we
see that $g=\Psi\circ G_1$ has image in the quadric defined by
\eqref{eq:13}.  Moreover, since $(\D G_{1},w)=0$ and $\D
G_{1}\in\Omega^{1}(\mathcal{C})$, we get
\begin{equation*}
(\D G,\D G)=(\D G_{1},\D
G_{1})=(\mathcal{D}G_{1},\mathcal{D}G_{1})=(\D g,\D g)
\end{equation*}
so that $\Lambda_e$, with the metric induced from $E(w)$, is
isometric to the quadric \eqref{eq:13} wherever it immerses.
\end{proof}


\begin{thebibliography}{10}

\bibitem{Bia05}
L.~Bianchi, \emph{Ricerche sulle superficie isoterme e sulla deformazione delle
  quadriche}, Ann. di Mat. \textbf{11} (1905), 93--157.

\bibitem{Bia05a}
\bysame, \emph{Complementi alle ricerche sulle superficie isoterme}, Ann. di
  Mat. \textbf{12} (1905), 19--54.

\bibitem{BurHerPedPin97}
F.~Burstall, U.~Hertrich-Jeromin, F.~Pedit, and U.~Pinkall, \emph{Curved flats
  and isothermic surfaces}, Math. Z. \textbf{225} (1997), no.~2, 199--209.
  \MR{MR1464926 (98j:53004)}

\bibitem{Bur06}
F.E. Burstall, \emph{Isothermic surfaces: conformal geometry, {C}lifford
  algebras and integrable systems}, Integrable systems, geometry, and topology,
  AMS/IP Stud. Adv. Math., vol.~36, Amer. Math. Soc., Providence, RI, 2006,
  pp.~1--82. \MR{MR2222512 (2008b:53006)}

\bibitem{BurCal}
F.E. Burstall and D.~Calderbank, \emph{Conformal submanifold geometry}, In
  preparation.

\bibitem{BurDonPedPin09}
F.E. Burstall, N.M. Donaldson, F.~Pedit, and U.~Pinkall, \emph{Isothermic
  submanifolds of symmetric {${R}$}-spaces}, J. reine u. Angew. Math. (to
  appear).

\bibitem{Cal03}
P.~Calapso, \emph{Sulle superficie a linee di curvatura isoterme}, Rendiconti
  Circolo Matematico di Palermo \textbf{17} (1903), 275--286.

\bibitem{Cal15}
\bysame, \emph{Sulle trasformazioni delle superficie isoterme}, Ann. di Mat.
  \textbf{24} (1915), 11--48.

\bibitem{Chr67}
E.~Christoffel, \emph{Ueber einige allgemeine {E}igenshaften der
  {M}inimumsfl\"achen}, Crelle's J. \textbf{67} (1867), 218--228.

\bibitem{CieGolSym95}
J.~Cie{\'s}li{\'n}ski, P.~Goldstein, and A.~Sym, \emph{Isothermic
  surfaces in {$\mathbf{E}^3$} as soliton surfaces}, Phys. Lett. A \textbf{205}
  (1995), no.~1, 37--43. \MR{MR1352426 (96g:53005)}

\bibitem{Dar99e}
G.~Darboux, \emph{Sur les surfaces isothermiques}, C.R. Acad. Sci. Paris
  \textbf{128} (1899), 1299--1305, 1538.

\bibitem{Dar99b}
\bysame, \emph{Sur les surfaces isothermiques}, Ann. Sci. \'Ecole Norm.
  Sup. (3) \textbf{16} (1899), 491--508. \MR{MR1508975}

\bibitem{Eis62}
L.P. Eisenhart, \emph{Transformations of surfaces}, Second edition,
  Chelsea Publishing Co., New York, 1962. \MR{MR0142061 (25 \#5455)}

\bibitem{Ger31}
S.~Germain, \emph{M\'{e}moire sur la coubure des surfaces}, Crelle's J.
  \textbf{7} (1831), 1--29.

\bibitem{Her97}
U.~Hertrich-Jeromin, \emph{Supplement on curved flats in the space of point
  pairs and isothermic surfaces: a quaternionic calculus}, Doc. Math.
  \textbf{2} (1997), 335--350 (electronic). \MR{MR1487468 (99b:53017)}

\bibitem{Her03}
\bysame, \emph{Introduction to {M}\"obius differential geometry}, London
  Mathematical Society Lecture Note Series, vol. 300, Cambridge University
  Press, Cambridge, 2003. \MR{MR2004958 (2004g:53001)}

\bibitem{HerPed97}
U.~Hertrich-Jeromin and F.~ Pedit, \emph{Remarks on the {D}arboux transform
  of isothermic surfaces}, Doc. Math. \textbf{2} (1997), 313--333 (electronic).
  \MR{MR1487467 (99k:53006)}

\bibitem{KamPedPin98}
G.~Kamberov, F.~Pedit, and U.~Pinkall, \emph{Bonnet pairs and
  isothermic surfaces}, Duke Math. J. \textbf{92} (1998), no.~3, 637--644.
  \MR{MR1620534 (99h:53009)}

\bibitem{Pal88}
B.~Palmer, \emph{Isothermic surfaces and the {G}auss map}, Proc. Amer.
  Math. Soc. \textbf{104} (1988), no.~3, 876--884. \MR{MR964868 (90a:53077)}

\bibitem{San08}
S.D. Santos, \emph{Special isothermic surfaces}, Ph.D. thesis, PhD, University
  of Bath, 2008.

\bibitem{Sch01}
W.K. Schief, \emph{Isothermic surfaces in spaces of arbitrary dimension:
  integrability, discretization, and {B}\"acklund transformations---a discrete
  {C}alapso equation}, Stud. Appl. Math. \textbf{106} (2001), no.~1, 85--137.
  \MR{MR1805487 (2002k:37140)}

\end{thebibliography}
\providecommand{\bysame}{\leavevmode\hbox to3em{\hrulefill}\thinspace}
\providecommand{\MR}{\relax\ifhmode\unskip\space\fi MR }
\providecommand{\MRhref}[2]{%
  \href{http://www.ams.org/mathscinet-getitem?mr=#1}{#2}
}
\providecommand{\href}[2]{#2}

\end{document}